\def\squarebox#1{\hbox to #1{\hfill\vbox to #1{\vfill}}}
\def\boxit#1{\vbox{\hrule\hbox{\vrule\kern6pt
          \vbox{\kern6pt#1\kern6pt}\kern6pt\vrule}\hrule}}
\renewcommand{\baselinestretch} {1.5}
\makeatletter \setcounter{page}{1}
\def\singlespace{\def\baselinestretch{1}\@normalsize}
\date{}
\newtheorem{theorem}{Theorem}
\newtheorem{lemma}{Lemma}
\newtheorem{remark}{Remark}
\newtheorem{assumption}{Assumption}
\newcommand{\mD}{\mathcal{D}}
\newcommand{\bI}{\boldsymbol{I}}
\newcommand{\bS}{\boldsymbol{S}}
\newcommand{\bU}{\boldsymbol{U}}
\newcommand{\bX}{\boldsymbol{X}}
\newcommand{\bx}{\boldsymbol{x}}
\newcommand{\by}{\boldsymbol{y}}
\newcommand{\bz}{\boldsymbol{z}}
\newcommand{\bbeta}{\boldsymbol{\beta}}
\newcommand{\bgamma}{\boldsymbol{\gamma}}
\newcommand{\bSigma}{\boldsymbol{\Sigma}}
\begin{document}
\baselineskip=22pt
\title{\bf\Large Online Debiased Lasso for Streaming Data}

\author{
{Ruijian Han\thanks{Ruijian Han and Lan Luo contributed equally to this work.} \textsuperscript{1}, Lan Luo$^*$\textsuperscript{2}, Yuanyuan Lin\textsuperscript{1} and
Jian Huang\textsuperscript{2}}\\
\small{\textsuperscript{1}Department of Statistics, The Chinese University of Hong Kong, Hong Kong SAR, China}
\\[-3mm]
\small{\textsuperscript{2}Department of Statistics and Actuarial Science, University of Iowa, Iowa City, Iowa, USA}
}
\maketitle

\begin{center}
Abstract
\end{center}
We propose an online debiased lasso (ODL) method for statistical inference in high-dimensional linear models with streaming data. The proposed ODL consists of an efficient computational algorithm for streaming data and approximately normal estimators for the regression coefficients. Its implementation only requires the availability of the current data batch in the data stream and sufficient statistics of the historical data at each stage of the analysis. A dynamic procedure is developed to select and update the tuning parameters upon the arrival of each new data batch so that we can adjust the amount of regularization adaptively along the data stream. The asymptotic normality of the ODL estimator is established under the conditions similar to those in an offline setting and mild conditions on the size of data batches in the stream, which provides theoretical justification for the proposed online statistical inference procedure. We conduct extensive numerical experiments to evaluate the performance of ODL. These experiments demonstrate the effectiveness of our algorithm and support the theoretical results. An air quality dataset and an index fund dataset from Hong Kong Stock Exchange are analyzed to illustrate the application of the proposed method.

\vspace{0.1in}

\noindent {\sc Key words}:  Adaptive tuning; Confidence interval; Debiased lasso;
Gradient descent; Online algorithm.

\newpage

\section{Introduction}
The advent of distributed online learning systems such as Apache Flink~\citep{Carbone2015ApacheFS} has motivated new developments in data analytics for streaming processing. Such systems enable efficient analyses of massive streaming data assembled through, for example, mobile or web applications~\citep{Mobile_users_2018}, e-commerce purchases~\citep{ecommerce_2016}, infectious disease surveillance programs~\citep{disease2016,disease2020}, mobile health consortia~\citep{mHealth2017,mHealth2020}, and financial trading floors~\citep{finance2018}. Streaming data refers to a data collection scheme where observations arrive sequentially and perpetually over time, making it challenging to fit into computer memory for static analyses. Researchers would query such continuous and unbounded data streams in real-time to answer questions of interest including assessing disease progression, monitoring product safety, and validating drug efficacy and side effects. In these scenarios, it is essential for practitioners to process data streams sequentially and incrementally as part of online monitoring and decision-making procedures. Additionally, data streams from various fields such as bioinformatics, medical imaging, and computer vision are usually high-dimensional in nature.

In this paper, we consider the problem of online statistical inference in high-dimensional linear regression with streaming data and propose an online debiased lasso (ODL) estimator. While substantial advancements have been made in online learning and associated optimization problems, the existing works focus on online computational algorithms for point estimation and their numerical convergence properties \citep{
	truncated_SGD_Langford_2009,
	AdaGrad2011,
	Tarres2014,
	Sun2018ANF}.
However, these works did not consider the statistical distribution properties of the online point estimators, which are needed for making statistical inference. Online statistical inference methods have been mostly developed under low-dimensional settings where $p\ll n$~\citep{Schifano2016CUEE, Luo2020}. The goal of this paper is to develop an online algorithm and statistical inference procedure for analyzing high-dimensional streaming data.

\subsection{Related work}
The last decade has witnessed enormous progress on statistical inference in high-dimensional models, see, for example,
\citet{Zhang_delasso_2014,vandegeer2014,javanmard2014confidence}, as well as the review paper~\citet{hdi2015} and the references therein.
Most of the advancements such as the novel debiased lasso have been developed for an offline setting.
A major difficulty in an online setting with streaming data is that one does not have full access to the entire dataset as new data arrives on a continual basis. To tackle the computational and inference problems due to the evolving nature of the high dimensional stream, it is desirable to develop an algorithm and statistical inference  procedure  in an online mode by updating the regression parameters sequentially with newly arrived data batch and summary statistics of historical raw data.

In recent years, there has been an ever-increasing interest in developing online variable selection methods for high-dimensional streaming data. Most of the work is along the line of lasso~\citep{tibshirani1996regression}.
For example,~\citet{truncated_SGD_Langford_2009} proposed an online $\ell_1$-regularized method via a variant of the truncated SGD.~\citet{online_sparse_Fan_2018} adopted the diffusion approximation techniques to characterize the dynamics of the sparse online regression process. Comprehensive development of online counterparts of popular offline variable selection algorithms such as lasso, Elastic Net \citep{zou2005regularization}, Minimax Convex Penalty (MCP) \citep{zhang2010nearly}, and Feature Selection with Annealing (FSA) \citep{AdaGrad2011} has been studied by \citet{Sun2018ANF}.
Nevertheless, it is known that variable selection methods focus on point estimation, but do not provide any uncertainty assessment.
There is no systematic study on 
statistical inference, including interval estimation and hypothesis testing, with high-dimensional streaming data. Another complication in
dealing with high-dimensional streaming data is that, the regularization parameter $\lambda$ that controls the sparsity level can no longer be determined by
the traditional cross-validation.
Instead,  small coefficients are rounded to zero with a certain threshold
or a pre-specified sparsity level~\citep{Sun2018ANF}.
Recently, \citet{Deshpande2019OnlineDF} considered a class of online estimators in a high-dimensional auto-regressive model and studied the asymptotic properties via martingale theories.~\citet{shi2020statistical} proposed an inference procedure for high-dimensional linear models via recursive online-score estimation.
In both works, it is assumed that the entire dataset is available at the initial stage
for computing an initial estimator  (e.g. the lasso estimator)
and the information in the streaming data is used to reduce the bias of the initial estimator.
However,  the assumption that the full dataset is available at the initial stage is not realistic in an online learning setting.

\subsection{Our contributions}
The goal of this work is to develop an online debiased lasso estimator for statistical inference with high-dimensional streaming data.  Our proposed ODL differs from the aforementioned works on online inference in two crucial aspects. First, we do not assume the availability of the full dataset at the initial stage. Second, at each stage of the analysis, we only require the availability of the current data batch and sufficient statistics of historical data. Therefore, ODL achieves statistical efficiency without accessing the entire dataset. Furthermore, we propose a new approach for tuning parameter selection that is naturally suited to the streaming data structure. In addition, we provide a detailed theoretical analysis of the proposed ODL estimator. The main contributions of the paper are as follows.
\begin{itemize}
	\setlength\itemsep{-0.10 cm}
	\item We introduce a new approach
	for online statistical inference in high-dimensional linear models. Our proposed ODL consists of two main ingredients: online lasso estimation and online debiasing lasso.
	Instead of re-accessing the entire dataset, we utilize only sufficient statistics and the current data batch.
	\item  We propose a new adaptive procedure to determine and update the tuning parameter $\lambda$ dynamically upon the arrival of a new data batch, which
	enables us to adjust the amount of regularization adaptively along with the data accumulation. Our proposed online tuning parameter selector aligns with the online estimation and debiasing procedures and involves summary statistics only.
	\item We establish the asymptotic normality of the proposed ODL estimator under the conditions similar to those in an offline setting and mild conditions on the batch sizes.
	We show that the asymptotic normality result holds as the cumulative sample size goes to infinity, regardless of finite data batch sizes. These results provide a theoretical basis for constructing confidence intervals and conducting hypothesis tests with approximately correct confidence levels and test sizes, respectively.
	\item
	Extensive numerical experiments with simulated data demonstrate that ODL algorithm is computationally efficient and strongly support the theoretical properties of the  ODL estimator. An air pollution dataset is also used to illustrate the application of ODL.
\end{itemize}

The rest of the paper is organized as follows. Section~\ref{sec:online_debiased_lasso} presents the model formulation and our proposed ODL procedure. Section~\ref{sec3} includes the theoretical properties of the proposed ODL estimator. Simulation experiments are given in Section~\ref{sec:sim} to evaluate the performance of our proposed ODL in comparison to the offline ordinary least square estimator. In Section~\ref{sec:data} we demonstrate the application of the proposed method on an air pollution dataset. Concluding remarks are given in Section~\ref{sec:discussion}. Detailed proof of the theoretical properties is included in the appendix.

\section{Online debiased lasso}\label{sec:online_debiased_lasso}
Consider a time point $b\geq2$ with a total of $N_b$ samples arriving in a sequence of $b$ data batches, denoted by $\{\mathcal{D}_1,\dots,\mathcal{D}_b \}$. Samples in each data batch $\mathcal{D}_j=\{\bm{y}^{(j)},\bm{X}^{(j)} \}$ satisfy:
\begin{equation}\label{lm1}
	\bm{y}^{(j)} = \bm{X}^{(j)}\bm{\beta}_0 + \bm{\epsilon}^{(j)}, \ j=1,\dots,b,
\end{equation}
where $\bm{y}^{(j)}=(y^{(j)}_{1},\dots,y^{(j)}_{n_j})^\top$ is the response vector and $\bm{X}^{(j)}=(\bm{x}^{(j)}_{1},\dots,\bm{x}^{(j)}_{n_j})^\top$ is an $ n_j \times p $ design matrix with $n_j$ being the data batch size. Here the regression coefficient $\bm{\beta}_0=(\beta_{0, 1}, \ldots, \beta_{0, p})^\top
\in\mathbb{R}^p$ is an unknown but sparse vector, and the error terms ${\epsilon}^{(j)}_{i}$,
$i=1,\dots,n_j$,  are independent and identically distributed (i.i.d) with mean zero and finite but unknown variance $\sigma^2_\epsilon$.
Throughout the paper, we consider a high-dimensional linear model, in particular, we allow $ p \ge N_b\equiv  \sum_{j =1}^b n_j $.

In a streaming data setting where data volume accumulates fast over time, individual-level raw data may not be stored in memory for a long time, making it impossible to implement the offline debiased algorithms \citep{ Zhang_delasso_2014, vandegeer2014, javanmard2014confidence} that require access to the entire dataset.
To address this issue, we develop an online debiasing procedure for each component of $ \bm{\beta}_{0}$ in model~\eqref{lm1}.
Without loss of generality, our discussion in the following focuses on the estimation and inference of $\beta_{0, r}$,  the $r$-th component of
$\bm{\beta}_{0}$, $r=1,\ldots, p$.

When the first data batch $ \mathcal{D}_1=\{\bm{y}^{(1)},\bm{X}^{(1)} \} $ arrives, we start off by applying the offline debiased lasso to obtain the initial estimator. Specifically, let $ \bm{x}_r^{(1)} $ be the $ r$-th column of $ \bm{X}^{(1)} $ and $ \bm{X}_{-r}^{(1)} $ be the sub-matrix of $\bm{X}^{(1)} $ excluding the $r$-th column.  An initial lasso estimator is given by
\begin{equation}\label{lasso_step_1}
	\bm{\widehat{\beta}}^{(1)} := \underset{\bm{\beta}\in\mathbb{R}^p}{\arg\min} \ \left\{\frac{1}{2n_1}\|\bm{y}^{(1)} - \bm{X}^{(1)}\bm{\beta} \|_2^2 + \lambda_1\|\bm{\beta} \|_1
	\right\},
\end{equation}
where  $\lambda_1 \ge 0 $ is a regularization parameter.  Following \citet{Zhang_delasso_2014}, to construct a confidence interval for  $\beta_{0, r}$,
a low-dimensional projection $\bm{\widehat{z}}^{(1)}_r$ that acts as the projection of  $\bm{x}_r^{(1)}$ to the orthogonal complement of the column space of $ \bm{X}^{(1)}_{-r}$, is defined as
$\bm{\widehat{z}}^{(1)}_r := \bm{x}_r^{(1)} - \bm{X}^{(1)}_{-r}\bm{\widehat{\gamma}}_r^{(1)}$, where
\begin{equation}\label{projection_step_1}
	\bm{\widehat{\gamma}}^{(1)}_{r} := \underset{\bm{\gamma}\in\mathbb{R}^{(p-1)}}{\arg\min}
	\left\{\frac{1}{2n_1}\|\bm{x}^{(1)}_{r} -  \bm{X}^{(1)}_{-r}\bm{\gamma}  \|_2^2 +\lambda_1\|\bm{\gamma} \|_1
	\right\},
\end{equation}
and $\lambda_1$ is taken to be the same as in (\ref{lasso_step_1}) for simplicity.
Then,  the offline debiased lasso estimator of $ \bm{\beta}_{0}$, $r=1, \ldots,p$,  is defined as
\begin{equation}\label{debiased_step_1}
	{\widehat{\beta}}^{(1)}_{\text{off},r} := {\widehat{\beta}}^{(1)}_{r} - \frac{(\bm{\widehat{z}}^{(1)}_r)^\top (\bm{y}^{(1)} - \bm{X}^{(1)}\bm{\widehat{\beta}}^{(1)})}{(\bm{\widehat{z}}^{(1)}_r)^\top\bm{x}_r^{(1)}}.
\end{equation}
Later on, when  the second  batch $ \mathcal{D}_2 = \{\bm{y}^{(2)},\bm{X}^{(2)} \} $  arrives,
the offline debiased lasso algorithm would replace $ \bm{y}^{(1)} $ and $ \bm{X}^{(1)}  $ by the augmented full dataset
$ \left\{\bm{y}^{(1)}, \bm{y}^{(2)}\right\}$ and $\left\{\bm{X}^{(1)}, \bm{X}^{(2)}\right\}$  respectively in~\eqref{lasso_step_1}-\eqref{debiased_step_1}.
However, $\{\bm{y}^{(1)},\bm{X}^{(1)} \}$ may no longer be available in an online setting. To address this issue, we propose an online estimation and inference procedure that utilize the information in historical raw data via summary statistics. We present the three main steps of ODL in Subsections~\ref{sec: online lasso}-\ref{sec: online debias} below.

\subsection{Online lasso}\label{sec: online lasso}
Upon the arrival of data batch $\mathcal{D}_b=\{\bm{y}^{(b)},\bm{X}^{(b)} \}$ with $b\geq2$, if all previous data batches $ \{\mathcal{D}_1,\ldots, \mathcal{D}_{b-1}  \} $ are available, we can use the offline lasso method that solves the following optimization problem:
\begin{equation}\label{lasso_step_j}
	\bm{\widehat{\beta}}^{(b)}(\lambda_{b}) := \underset{\bm{\beta}\in\mathbb{R}^p}{\arg\min} \ \left\{\frac{1}{2N_b}\sum_{j =1}^{b}\|\bm{y}^{(j)} - \bm{X}^{(j)}\bm{\beta} \|_2^2 + \lambda_b\|\bm{\beta} \|_1
	\right\},
\end{equation}
where $ N_b = \sum_{j=1}^{b}n_j$ is the cumulative sample size and $\lambda_b$ is the regularization parameter adaptively chosen for step $b$. For simplicity, we use $\bm{\widehat{\beta}}^{(b)}$ to denote $\bm{\widehat{\beta}}^{(b)}(\lambda_{b})$ except in the discussion on the choice of $\lambda_{b} $ in Section~\ref{sec: tuning parameter}.
However, since we only assume the availability of summary statistics of historical data, we cannot use the algorithms such as coordinate descent \citep{fhht2007} that requires the availability of the whole dataset.

Note that the objective function in~\eqref{lasso_step_j} depends on the data only through the following summary statistics:
\begin{equation}\label{summary_statistics}
	\bm{{S}}^{(b)}\equiv \sum_{j=1}^{b}(\bm{X}^{(j)})^\top\bm{X}^{(j)},\ \bm{U}^{(b)}\equiv \sum_{j=1}^{b}(\bm{X}^{(j)})^\top\bm{y}^{(j)}.
\end{equation}
Therefore, it is reasonable to refer to these summary statistic as sufficient statistics relative to this objective function, or simply sufficient statistics without confusing with the concept of sufficient statistic in a parametric model.
Based on these two summary statistics, one can obtain the solution to \eqref{lasso_step_j} by the gradient descent algorithm.

Let $ \mathcal{L}(\bbeta) = \sum_{j =1}^{b}\|\bm{y}^{(j)} - \bm{X}^{(j)}\bm{\beta} \|_2^2/(2N_b) $, and the gradient of $\mathcal{L}(\bbeta)$  is given by
\begin{equation}\label{gradient_descent}
	\frac{\partial \mathcal{L}(\bbeta) }{\partial \bbeta}  = \frac{1}{N_b}(\bm{{S}}^{(b)} \bbeta - \bm{U}^{(b)}).
\end{equation}
Notably, the gradient depends on the historical raw data only through the summary statistics $\bm{{S}}^{(b)}$ and $\bm{U}^{(b)}$.
We update the solution iteratively by combining a gradient descent step and soft thresholding \citep{daubechies2004iterative, donoho1994}. Specifically,
each iteration consists of the following two steps:
\begin{itemize}
	\item Step 1 (Gradient descent): update $\widehat{\bbeta}^{(b)}$ through
	\begin{equation}
		\widehat{\bbeta}^{(b)} \leftarrow \widehat{\bbeta}^{(b)} - \eta \frac{\partial \mathcal{L}(\bbeta)}{\partial \bbeta}
		=\widehat{\bbeta}^{(b)} - \frac{\eta}{N_b} (\bS^{(b)}\widehat{\bbeta}^{(b)} - \bU^{(b)}),
	\end{equation}
	where $ \eta $ is the learning rate in the gradient descent;
	\item Step 2 (Soft thresholding):  apply the soft-thresholding operator ${\mathcal{S}}(\widehat{\beta}^{(b)}_r; \eta\lambda_b)$ to  the $r$-th component in $\widehat{\bbeta}^{(b)}$  in step 1, for $r=1,\dots,p$, where
	$\mathcal{S}(x, \lambda)=\text{sgn}(x)(|x|-\lambda)_+$.
\end{itemize}
These two steps are carried out iteratively till convergence. In the implementation, the stopping criterion is set as $\|{\partial \mathcal{L}(\bbeta)}/{\partial\bbeta} \|_2 \leq 10^{-6}$.

The size of the sufficient statistics will not increase as more data batches arrive. For example, when a new batch $ \mathcal{D}_b $ arrives, we update the summary statistics by
\begin{equation*}
	\bm{S}^{(b)} = \bm{S}^{(b-1)} + (\bm{X}^{(b)})^\top\bm{X}^{(b)}, \ \bm{U}^{(b)} = \bm{U}^{(b-1)} + (\bm{X}^{(b)})^\top\bm{y}^{(b)}.
\end{equation*}
incrementally, which are matrices of fixed dimensions even if $b\to\infty$. In addition, a consistent estimator of $\sigma_{\epsilon}^2$ by the method of moments is given by
\begin{equation}\label{eq:sigma_sq}
	(\widehat{\sigma}^2_{\epsilon})^{(b)} \equiv \frac{N_{b-1}}{N_b}(\widehat{\sigma}_{\epsilon}^2)^{(b-1)} + \frac{n_b}{N_b} (\by^{(b)} - \bX^{(b)}\widehat{\bbeta}^{(b)})^\top (\by^{(b)} - \bX^{(b)}\widehat{\bbeta}^{(b)}),
\end{equation}
which will be used in constructing the confidence intervals.

\subsection{Online low-dimensional projection}\label{sec: online projection}

Next, we obtain an online estimator for the low-dimensional projection.  Let
\begin{equation}\label{projection_step_j}
	\bm{\widehat{\gamma}}_r^{(b)} := \underset{\bm{\gamma}\in\mathbb{R}^{(p-1)}}{\arg\min}
	\left\{\frac{1}{2N_b}\sum_{j=1}^{b}\|\bm{x}^{(j)}_{r} -  \bm{X}^{(j)}_{-r}\bm{\gamma}  \|_2^2 +\lambda_b\|\bm{\gamma} \|_1
	\right\},
\end{equation}
where $ N_b$ and $\lambda_b$ are the same as in (\ref{lasso_step_j}).
We can summarize the data information in the following two statistics:
$
\bm{R}^{(b)}=\sum_{j=1}^{b}(\bm{X}^{(j)}_{-r})^\top\bm{X}^{(j)}_{-r}, \ \bm{T}^{(b)}=\sum_{j=1}^{b}(\bm{X}^{(j)}_{-r})^\top\bm{x}^{(j)}_{r}.
$
Repeating similar procedure in the online lasso, we can obtain $ \bm{\widehat{\gamma}}_r^{(b)}  $
and  further define a low-dimensional projection
$\bm{\widehat{z}}^{(b)}_{r} :=\bm{x}^{(b)}_{r}-\bm{X}^{(b)}_{-r}\bm{\widehat{\gamma}}^{(b)}_{r}. $
It is worth mentioning that $\bm{R}^{(b)}$ and $\bm{T}^{(b)}$ are obtained from $ \bS^{(b)} $ directly with $\bm{R}^{(b)}=\bS^{(b)}_{-r,-r}$ and $\bm{T}^{(b)}=\bS^{(b)}_{-r,r} $, where $\bS^{(b)}_{-r,-r}$ is a sub-matrix of $\bS^{(b)}$ excluding the $r$-th row and the $r$-th column, and $\bS^{(b)}_{-r,r} $ is a sub-matrix of $\bS^{(b)}$ with the $r$-th row being deleted but the $r$-th column being kept. The low-dimensional projection $\bm{\widehat{z}}^{(b)}_{r}$ will be used in constructing the debiased estimator in Subsection~\ref{sec: online debias}.

\subsection{Online debiased lasso estimator}\label{sec: online debias}
When data batch  $\mD_b$ arrives,  the ODL  
estimator  for $\beta_{0, r}$, $r=1,\ldots, p$, is defined as
\begin{equation}\label{online_debiased_algorithm}
	\begin{split}
		{\widehat{\beta}}^{(b)}_{\text{on}, r} := \widehat{{\beta}}^{(b)}_{r} +\left\{  \sum_{j=1}^{b}(\bm{\widehat{z}}^{(j)}_{r})^\top\bm{x}^{(j)}_{r}\right\}^{-1} \left\{\sum_{j=1}^{b}(\bm{\widehat{z}}^{(j)}_{r})^\top\bm{y}^{(j)}-\sum_{j=1}^{b}(\bm{\widehat{z}}^{(j)}_{r})^\top\bm{X}^{(j)}\bm{\widehat{\beta}}^{(b)}\right\}.
	\end{split}
\end{equation}
Although  the historical data are  still involved in \eqref{online_debiased_algorithm}, we  only need to store the following statistics rather than the entire dataset to compute ${\widehat{\beta}}^{(b)}_{\text{on}, r}$. Specifically, let
$
a_{1}^{(b)} :=  \sum_{j=1}^{b}(\bm{\widehat{z}}^{(j)}_{r})^\top\bm{x}^{(j)}_{r}, \ a_{2}^{(b)} :=  \sum_{j=1}^{b}(\bm{\widehat{z}}^{(j)}_{r})^\top\bm{y}^{(j)}, \ \bm{A}_{1}^{(b)} :=  \sum_{j=1}^{b} (\bm{\widehat{z}}^{(j)}_{r})^\top\bm{X}^{(j)},
$
which have the same dimensions 
when the new data arrives and can be easily updated. For example, we update $a_1^{(b)}$
by
$
a_1^{(b)} = a_1^{(b-1)} + (\bm{\widehat{z}}^{(b)}_{r})^\top\bm{x}^{(b)}_{r}.
$
Consequently, by substituting the online lasso estimator and low-dimensional projection into \eqref{online_debiased_algorithm}, we obtain the ODL estimator $ {\widehat{\beta}}^{(b)}_{\text{on}, r}. $ Meanwhile, as discussed in Section \ref{sec3}, the estimated standard error is given by $ \widehat{\sigma}^{(b)}_\epsilon\widehat{\tau}^{(b)}_{r} $,
where
\begin{equation}\label{tau_b}
	\widehat{\tau}^{(b)}_{r} = \frac{\sqrt{\sum_{j=1}^{b}(\bm{\widehat{z}}^{(j)}_{r})^\top\bm{\widehat{z}}^{(j)}_{r}}}{\sum_{j=1}^{b}(\bm{\widehat{z}}^{(j)}_{r})^\top\bm{x}^{(j)}_{r}},
\end{equation}
and $(\widehat{\sigma}^{2}_\epsilon)^{(b)}$  is given in \eqref{eq:sigma_sq} in Subsection \ref{sec: online lasso}.

\subsection{Tuning parameter selection}\label{sec: tuning parameter}
In an offline setting, the tuning parameter $\lambda$ can be  chosen from a candidate set  via cross-validation where the entire dataset is split into training and testing sets multiple times. However, such a sample-splitting scheme is not applicable
in an online setting since we do not have the full dataset at hand.
A natural sample-splitting idea that aligns with the streaming data structure 
originates from the forecasting accuracy evaluation in time series; see Figure~\ref{fig:onlineCV}. At time point $b$, those sequentially arrived data batches up to time point $b-1$, denoted by $\{\mD_1,\dots,\mD_{b-1} \}$, serve as the training set, and the current data batch $\mathcal{D}_b$ is the testing set. This procedure is also known as ``rolling-original-recalibration"~\citep{Tashman_recal_2000}.

Specifically, with only the first data batch $\mathcal{D}_1$, $\bm{\widehat{\beta}}^{(1)}$ is a standard lasso estimator with $\lambda_1$ selected by the classical offline cross-validation. When the $b$-th data batch $\mathcal{D}_b$ arrives, we calculate
$$PE_b(\lambda)=\frac{1}{n_b}\|(\by^{(b)} - \bX^{(b)}\bm{\widehat{\beta}}^{(b-1)}(\lambda)\|_2^2,~ \lambda \in T_\lambda,$$
and define $$\lambda_b := \arg\underset{\lambda\in T_\lambda}{\min}\ PE_b(\lambda).$$
In such a way, we are able to determine $\lambda_b$ upon the arrival of a new data batch $\mathcal{D}_b$ adaptively, and extract the corresponding lasso estimator $\widehat{\bbeta}^{(b)}(\lambda_b)$ as the starting point for ODL.
\begin{figure}[h]
	\centering
	\includegraphics[width=3.2 in, height=1.6 in]{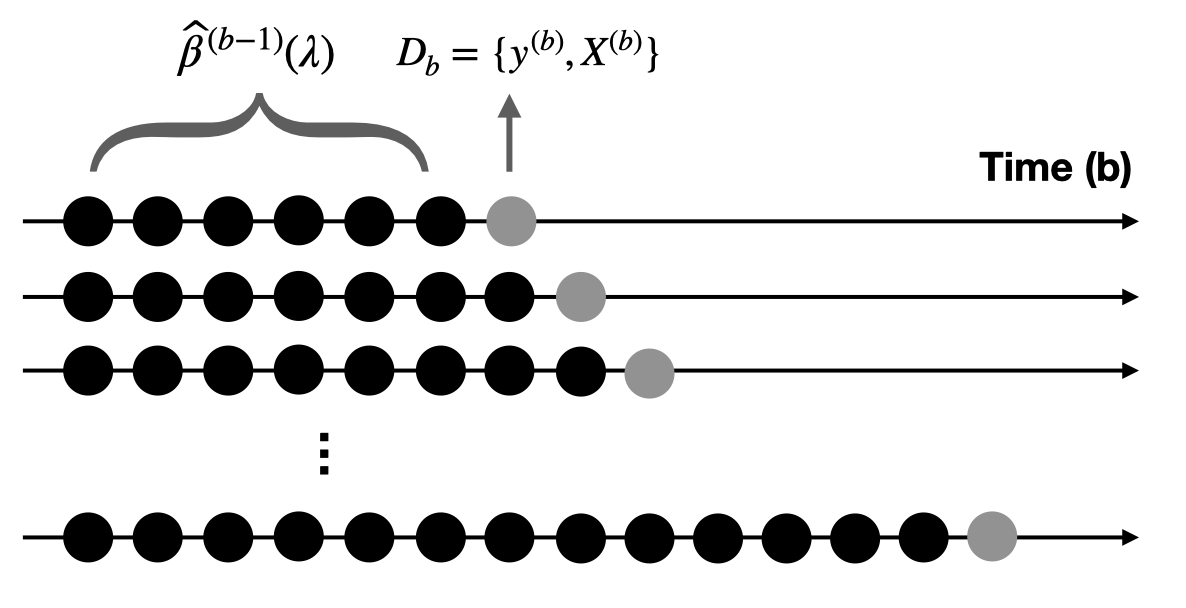}
	\caption{\label{fig:onlineCV}A diagram illustrates the series of training and test sets, where the black dots form the training sets, and the gray dots form the test sets. At a time point $b$,  the ODL estimator $\widehat{\bbeta}^{(b-1)}(\lambda)$ is obtained based on the training set $\{\mD_1,\dots,\mD_{b-1} \}$ and the current data batch $\mD_b=\{\by^{(b)},\bX^{(b)}\}$ is the testing set.}
\end{figure}

\subsection{Summary} 
We now summarize our proposed ODL procedure for the statistical inference of $\beta_{0,r}, r=1, \ldots, p,$ using a flowchart in Figure~\ref{fig:algorithm}. It consists of two main blocks: one is~\textit{online lasso estimation} and the other is \textit{online low-dimensional projection}. Outputs from both blocks are used to compute the online debiased lasso estimator as well as the construction of confidence intervals in real-time. In particular, when a new data batch $\mathcal{D}_b$ arrives,
it is first sent to the \textit{online lasso estimation} block, where the summary statistics $\left\{\bS^{(b-1)},\bU^{(b-1)} \right\}$ are updated to $\left\{\bS^{(b)},\bU^{(b)} \right\}$. These summary statistics facilitate the updating of the lasso estimator $\widehat{\bbeta}^{(b-1)}$ to $\widehat{\bbeta}^{(b)}$
at some grid values of the tuning parameters without retrieving the whole dataset. At the same time, regarding the cumulative dataset that produces the old lasso estimate $\widehat{\bbeta}^{(b-1)}$ as training set and the newly arrived $\mathcal{D}_b$ as testing set, we can choose the tuning parameter $\lambda_b$ that gives the smallest prediction error. Now, the selected $\lambda_b$ is passed to the \textit{low-dimensional projection} block for the calculation of  $\widehat{\bgamma}_r^{(b)}(\lambda_b)$. The idea of online updating is the same as in \textit{lasso estimation}, except the relevant summary statistics are the sub-matrices of $\bS^{(b)}$. The resulting projection $\widehat{\bz}_r^{(b)}$ output from the \textit{low-dimensional projection} block together with the lasso estimator $\bm{\widehat{\beta}}^{(b)}(\lambda_b)$ will be used to compute the debiased lasso estimator $\widehat{\beta}^{(b)}_{\text{on},r}$ and its estimated standard error.

\begin{figure}[h]
	\centering
	\includegraphics[width=\linewidth]{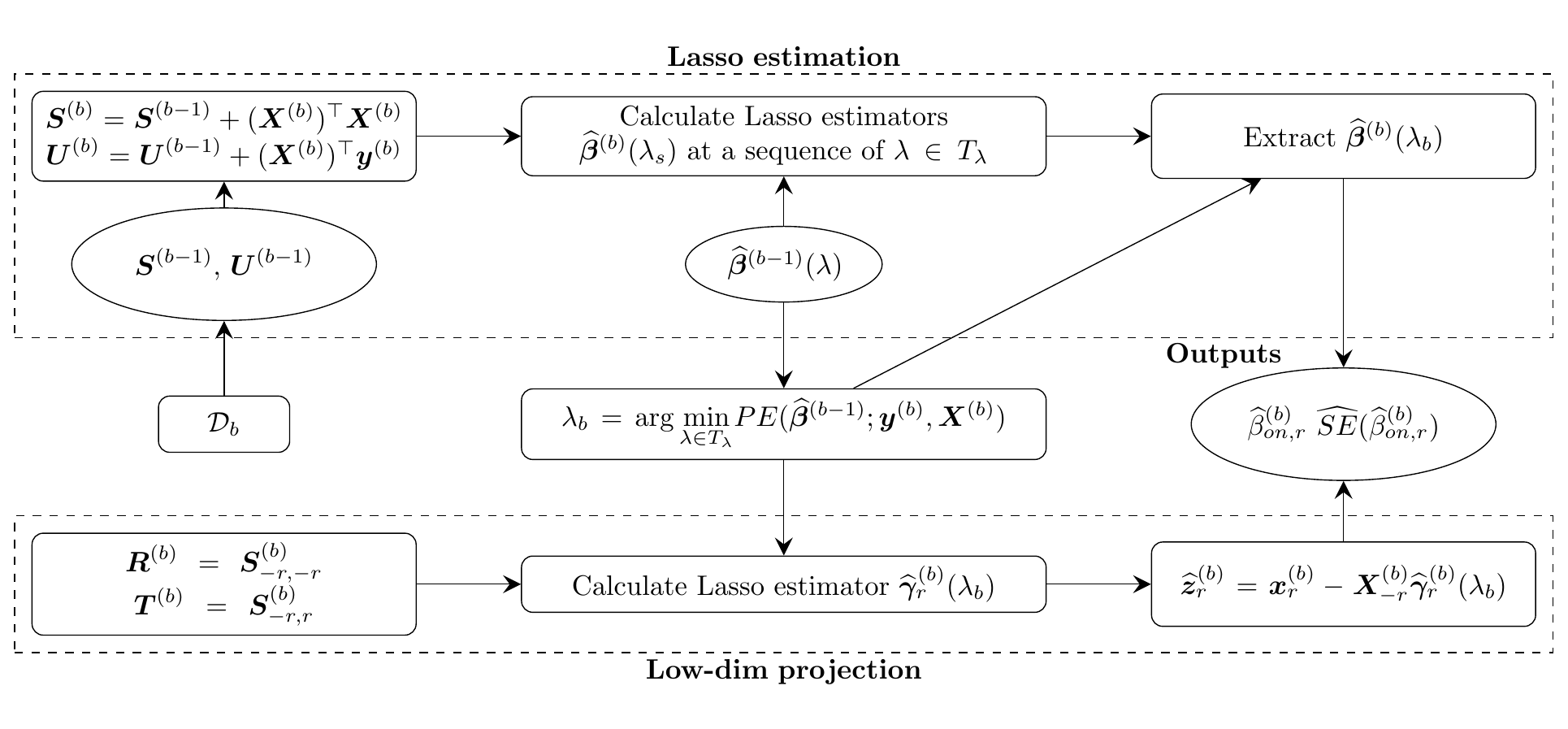}
	\caption{\label{fig:algorithm}Flowchart of the online debiasing algorithm. When a new data batch $\mD_b$ arrives, it is sent to the  \textit{lasso estimation} block for updating $\widehat{\bbeta}^{(b-1)}$ to $\widehat{\bbeta}^{(b)}$. At the same time, it is also viewed as test set for adaptively choosing tuning parameter $\lambda_b$. In the \textit{low-dim projection}  block, we extract sub-matrices from the updated summary statistic $\bS^{(b)}$ to compute $\widehat{\bgamma}_r^{(b)}(\lambda_b)$ and the corresponding low-dimensional projection $\widehat{\bz}_r^{(b)}$. Outputs $\widehat{\beta}_r^{(b)}(\lambda_b)$ and $\widehat{\bz}_r^{(b)}$ from  the two blocks are further used to compute the debiased lasso estimator $\widehat{\beta}_{\text{on},r}^{(b)}$ and its estimated standard error $\widehat{\text{SE}}(\widehat{\beta}_{\text{on},r}^{(b)})$.}
\end{figure}

\begin{remark}
When $p$ is large, 	
the online algorithm presented in Figure \ref{fig:algorithm} requires a sufficient large storage capacity, since sample covariance matrix $\bS^{(b)}$ requires $\mathcal{O}(p^2)$ space complexity. To reduce memory usage, we can apply the eigenvalue decomposition (EVD) of $\bS^{(b)}=Q_{b}\Lambda_{b} Q_{b}^\top$, where $Q_{b}$ is the $p\times N_{b}$ orthogonal matrix combined by the eigenvectors, $\Lambda_{b}$ is the $N_{b}\times N_{b}$ diagonal matrix whose diagonal elements are the eigenvalues of $\bS^{(b)}$. We only need to store $Q_{b}$ and $\Lambda_{b}.$ Since $ r_{b}=  $ rank$ (\Lambda_{b}) \leq \min\{N_{b}, p \},$
	we can use an incremental EVD approach~\citep{onlinePCA2018} to update $Q_{b}$ and $\Lambda_{b}$. Then the space complexity reduces to $\mathcal{O}(r_{b}p)$. The space complexity can be further reduced by setting a threshold. For example, select the principal components which explain most of the variations in the predictors. However, incremental EVD could increase the computational cost since it requires additional $ \mathcal{O}(r_{b}^2p) $ computational complexity. Indeed, there is a trade-off between the space complexity and computational complexity. How to balance this trade-off is an important computational issue and deserves careful analysis,  but is beyond the scope of this study.
\end{remark}

\section{Theoretical properties}\label{sec3}
To establish the asymptotic properties of the ODL estimator proposed in Section \ref{sec:online_debiased_lasso}, we first introduce some notation.
Consider a random design matrix $\bm X$ with i.i.d rows. Let  $ \bm\Sigma $ be
the covariance matrix of each row of $\bm X$. Denote the inverse of $\bm\Sigma$ by
 $  \bm{\Theta} =  \bm{\Sigma}^{-1}. $
For $r=1,\ldots, p$, define
$$ \bm{\gamma}_{r} := \arg\min_{\bm{\gamma} \in \mathbb{R}^{p-1}} \mathbb{E}[\lVert \bm{x}_r - \bm{X}_{-r}\bm{\gamma}\lVert_2^2], $$
and the corresponding residual vector is $ \bm{z}_r := \bm{x}_r - \bm{X}_{-r}\bm{\gamma}_r.$ Let 
$  s_0 = |\{j: {\bm \beta}_j \neq 0 \}|$ and $s_r = |\{k \neq r:\bm{\Theta}_{k,r} \neq 0 \}|   $ be  two sparsity levels.

The following regularity conditions  on the design matrix $ \bX $ and the error terms are imposed to establish the asymptotic results. Specifically, we assume that $ \bX $ has either i.i.d sub-Gaussian or bounded rows. We first consider the sub-Gaussian case.

\begin{assumption}\label{assumption_1}
	Suppose that
	
	(A1) The design matrix $ \bm X $ has i.i.d sub-Gaussian rows.
	
	(A2) The smallest eigenvalue $ \Lambda^2_{\min} $ of $ \bm \Sigma $ is strictly positive and $ 1/\Lambda^2_{\min} = O(1). $ In addition, the largest diagonal element of $ \bm \Sigma $,  $ \max_j \Sigma_{j,j} = {O}(1).$
	
	(A3) The error terms  ${\epsilon}^{(j)}_{i}$, $i\in\mathcal{D}_j$, $j=1,\dots,b$ are sub-exponential. 
\end{assumption}

\begin{theorem}\label{thm_online}
	Assume Assumption \ref{assumption_1} holds.
	For the $ j$-th data batch,
	suppose that the tuning parameter  $\lambda_j$ satisfies
	$ \lambda_j = C\sqrt{{\log p}/{N_j}} $,  $j=1,\ldots, b$. If the first batch size $ {n_1} \geq cs_r\log p $,  the subsequent batch size $  n_j \geq c\log p, j=2,\ldots, b$,  for some constant $ c $,  and
	\begin{eqnarray}\label{online_additional_assumption}
		s_0s_r \frac{\log (p)}{\sqrt{N_b}} = o(1), \ s_r^2{\frac{\log (p)}{N_b}\log{\frac{N_b}{n_1}}} = o(1),
	\end{eqnarray}
	then, for any $ r = 1, \ldots, p $ and sufficiently large $ N_b $,
	\begin{eqnarray*}
		&&({\widehat{\beta}}^{(b)}_{\text{on},r} - { \beta}_{0,r})/\widehat{\tau}^{(b)}_{r}= W_r + \Delta_r,\\
		&&W_r=\frac{\bm{\widehat{z}}_{r}^\top \bm \epsilon}{\lVert\bm{\widehat z}_{r}\lVert_2},\ |\Delta_r| = o_{\mathbb{P}}(1),
	\end{eqnarray*}
	where $ \widehat{\tau}^{(b)}_{r} $ is defined in \eqref{tau_b}.
\end{theorem}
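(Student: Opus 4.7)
The plan is to reduce the claim to a linear statistic $W_r$ plus a bias-like remainder $\Delta_r$, and to show that the batch-wise construction of $\bm{\widehat z}_r^{(j)}$ still admits a controllable reduction to the cumulative KKT conditions. First I would substitute $\bm{y}^{(j)}=\bm{X}^{(j)}\bm{\beta}_0+\bm{\epsilon}^{(j)}$ into the ODL formula~\eqref{online_debiased_algorithm}, split $\bm{X}^{(j)}=[\bm{x}_r^{(j)},\bm{X}_{-r}^{(j)}]$, and use the cancellation of the diagonal coefficient $A_b:=\sum_{j=1}^b(\bm{\widehat z}_r^{(j)})^\top\bm{x}_r^{(j)}$. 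This produces the algebraic identity
\begin{equation*}
\widehat{\beta}^{(b)}_{\text{on},r}-\beta_{0,r}=\frac{\sum_{j=1}^b(\bm{\widehat z}_r^{(j)})^\top\bm{\epsilon}^{(j)}}{A_b}+\frac{M_b\,(\bm{\beta}_{0,-r}-\bm{\widehat\beta}^{(b)}_{-r})}{A_b},\qquad M_b:=\sum_{j=1}^b(\bm{\widehat z}_r^{(j)})^\top\bm{X}_{-r}^{(j)}.
\end{equation*}
Dividing through by $\widehat\tau^{(b)}_r=\|\bm{\widehat z}_r\|_2/A_b$, with $\bm{\widehat z}_r$ the stacked projection vector, identifies the first summand as $W_r$ and the second as $\Delta_r$, so the remaining task is to show $|\Delta_r|=o_{\mathbb{P}}(1)$.

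By H\"older's inequality, $|\Delta_r|\le \|M_b\|_\infty \|\bm{\widehat\beta}^{(b)}-\bm{\beta}_0\|_1 /\|\bm{\widehat z}_r\|_2$, so I would bound each factor separately. Because the cumulative summary statistics $(\bm{S}^{(b)},\bm{U}^{(b)})$ make the online lasso~\eqref{lasso_step_j} algebraically identical to the offline lasso on the stacked design, standard restricted-eigenvalue theory under Assumption~\ref{assumption_1} yields $\|\bm{\widehat\beta}^{(b)}-\bm{\beta}_0\|_1=O_{\mathbb{P}}(s_0\sqrt{\log p/N_b})$, and an analogous argument gives $\|\bm{\widehat\gamma}_r^{(j)}-\bm{\gamma}_r\|_1=O_{\mathbb{P}}(s_r\sqrt{\log p/N_j})$. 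The batch-size condition $n_1\gtrsim s_r\log p$ is what ensures the restricted-eigenvalue condition activates already from the first batch, and $n_j\gtrsim\log p$ is used for per-batch entrywise concentration. The lower bound $\|\bm{\widehat z}_r\|_2\gtrsim\sqrt{N_b}$ then follows from $\|\bm{z}_r^{(j)}\|_2^2/n_j\to 1/\Theta_{r,r}\gtrsim\Lambda_{\min}^2$ combined with the $\ell_1$-consistency of $\bm{\widehat\gamma}_r^{(j)}$ used to swap $\bm{\widehat z}_r^{(j)}$ for $\bm{z}_r^{(j)}$.

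The main obstacle is bounding $\|M_b\|_\infty$. The complication is that $M_b$ mixes a per-stage projection $\bm{\widehat\gamma}_r^{(j)}$ with a per-batch design $\bm{X}_{-r}^{(j)}$, so no single KKT identity applies. My strategy is to telescope through the final projection $\bm{\widehat\gamma}_r^{(b)}$,
\begin{equation*}
M_b=\underbrace{\sum_{j=1}^b(\bm{X}_{-r}^{(j)})^\top\bigl(\bm{x}_r^{(j)}-\bm{X}_{-r}^{(j)}\bm{\widehat\gamma}_r^{(b)}\bigr)}_{\text{KKT residual at step }b\,=\,N_b\lambda_b\widehat\kappa^{(b)},\ \|\widehat\kappa^{(b)}\|_\infty\le 1}+\sum_{j=1}^b(\bm{X}_{-r}^{(j)})^\top\bm{X}_{-r}^{(j)}\bigl(\bm{\widehat\gamma}_r^{(b)}-\bm{\widehat\gamma}_r^{(j)}\bigr).
\end{equation*}
The KKT piece is $O(\sqrt{N_b\log p})$ since $\lambda_b\asymp\sqrt{\log p/N_b}$. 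For the stage-drift piece, I would combine the entrywise sub-Gaussian bound $\|(\bm{X}_{-r}^{(j)})^\top\bm{X}_{-r}^{(j)}/n_j\|_{\max}=O_{\mathbb{P}}(1)$ with the triangle-inequality rate $\|\bm{\widehat\gamma}_r^{(b)}-\bm{\widehat\gamma}_r^{(j)}\|_1=O_{\mathbb{P}}(s_r\sqrt{\log p/N_j})$, applied uniformly over $j$. A union bound over the $b\asymp N_b/n_1$ batches together with the summation $\sum_j n_j\sqrt{\log p/N_j}$ produces the $\log(N_b/n_1)$ factor appearing in~\eqref{online_additional_assumption}, giving $\|M_b\|_\infty=O_{\mathbb{P}}\bigl(s_r\sqrt{N_b\log p\,\log(N_b/n_1)}\bigr)$. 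Plugging these pieces back into the H\"older bound yields
\begin{equation*}
|\Delta_r|=O_{\mathbb{P}}\!\left(\frac{s_0s_r\log p}{\sqrt{N_b}}+s_r\sqrt{\frac{\log p}{N_b}\log\frac{N_b}{n_1}}\cdot\frac{\|\bm{\widehat\beta}^{(b)}-\bm{\beta}_0\|_1}{\|\bm{\widehat\beta}^{(b)}-\bm{\beta}_0\|_1\vee 1}\right)=o_{\mathbb{P}}(1)
\end{equation*}
precisely under the two conditions in~\eqref{online_additional_assumption}, completing the decomposition.
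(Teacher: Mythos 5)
Your overall architecture matches the paper's: the same algebraic identity reducing $\widehat{\beta}^{(b)}_{\text{on},r}-\beta_{0,r}$ to $W_r$ plus a cross term, the same two tasks ($\|\bm{\widehat z}_r\|_2=\Omega(\sqrt{N_b})$ and an $o_{\mathbb{P}}(\sqrt{N_b})$ bound on $\sum_{k\neq r}\bm{\widehat z}_r^\top\bm{x}_k(\widehat{\beta}^{(b)}_k-\beta_{0,k})$), and the same telescoping of $M_b$ through the final-stage projection $\bm{\widehat\gamma}_r^{(b)}$ into a KKT residual plus a stage-drift sum. The ingredients (lasso $\ell_1$ rates for $\widehat{\bbeta}^{(b)}$ and $\widehat{\bgamma}_r^{(j)}$, per-batch entrywise concentration under $n_j\gtrsim\log p$) are exactly those of the paper's Lemmas~\ref{lemma_1}--\ref{lemma_3}.

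There is, however, a concrete bookkeeping error in where the $\log(N_b/n_1)$ factor enters, and it matters. The stage-drift sum is \emph{linear} in the $\ell_1$ errors: $\sum_j n_j\|\widehat{\bm\Sigma}^{(j)}_{-r}\|_\infty\|\bm{\widehat\gamma}_r^{(b)}-\bm{\widehat\gamma}_r^{(j)}\|_1=O_{\mathbb{P}}\bigl(s_r\sqrt{\log p}\sum_j n_j/\sqrt{N_j}\bigr)=O_{\mathbb{P}}\bigl(s_r\sqrt{N_b\log p}\bigr)$, using the elementary inequality $\sum_j n_j/\sqrt{N_j}\le 2\sqrt{N_b}$; no logarithmic factor appears and no union bound over batches is needed. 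Your claimed $\|M_b\|_\infty=O_{\mathbb{P}}\bigl(s_r\sqrt{N_b\log p\,\log(N_b/n_1)}\bigr)$ is therefore not only unnecessary but, taken at face value, fatal: combined with $\|\bm{\widehat\beta}^{(b)}-\bm{\beta}_0\|_1=O_{\mathbb{P}}(s_0\sqrt{\log p/N_b})$ and $\|\bm{\widehat z}_r\|_2\asymp\sqrt{N_b}$ it gives $|\Delta_r|=O_{\mathbb{P}}\bigl(s_0s_r\log(p)\sqrt{\log(N_b/n_1)}/\sqrt{N_b}\bigr)$, which is not $o(1)$ under \eqref{online_additional_assumption}. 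Your final display, which instead writes $|\Delta_r|$ as a \emph{sum} of the two rates in \eqref{online_additional_assumption}, does not follow from your own intermediate bounds. The $\log(N_b/n_1)$ factor in the second condition is in fact needed elsewhere: to lower-bound $\|\bm{\widehat z}_r\|_2$ one must control $\sum_j\|\bm{\widehat z}_r^{(j)}-\bm{z}_r^{(j)}\|_2^2\lesssim\sum_j n_j\|\bm{\widehat\gamma}_r^{(j)}-\bm{\gamma}_r\|_1^2\lesssim s_r^2\log(p)\sum_j n_j/N_j\lesssim s_r^2\log(p)\bigl(1+\log(N_b/n_1)\bigr)$, which is $o(N_b)$ precisely under the second condition; you assert this swap of $\bm{\widehat z}_r^{(j)}$ for $\bm{z}_r^{(j)}$ without noting that it is quadratic in the $\ell_1$ error and is the sole place that condition is used. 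With these two corrections the argument closes exactly as in the paper.
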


\begin{remark}\label{remark 2}
	\textit{
		Similar to the  offline debiased lasso estimator \citep{Zhang_delasso_2014, vandegeer2014},
		Theorem \ref{thm_online} implies that the dimensionality $ p $  could be at the exponential rate of the data size. However, the problem here is more difficult than that in the offline setting and the proofs for the properties
		of the offline debiased estimator do no apply here.
		Specifically,  let $ \bm{\widetilde{z}}_{r} = ( (\bm{\widetilde{z}}^{(1)}_{r})^\top, \ldots, (\bm{\widetilde{z}}^{(b)}_{r})^\top)^\top $ be the low-dimensional projection in the offline case, where
		$\bm{\widetilde{z}}^{(j)}_{r}=\bm{x}_{r}^{(j)}-\bm{X}_{-r}^{(j)}\bm{\widehat{\gamma}}^{(b)}_{r}$ is computed based on the $ j$-th batch data, $ j = 1,\ldots, b $. Here, $\bm{\widehat{\gamma}}^{(b)}_{r}$
		depends on the historical data $\{\mathcal{D}_1,\dots,\mathcal{D}_b \}$. In contrast, the proposed online low-dimensional projection  $ \bm{\widehat z}^{(j)}_{r} = \bm{x}^{(j)}_{r}-\bm{X}^{(j)}_{-r}\bm{\widehat{\gamma}}^{(j)}_{r} $, where  $\bm{\widehat{\gamma}}^{(j)}_{r}$ is obtained solely based on the $ \{\mathcal{D}_1,\dots,\mathcal{D}_j \} $.
		Thus the KKT condition for the lasso minimization problem does not hold for the online estimator.
		The arguments for the asymptotic properties of the debiased estimator in  \citet{Zhang_delasso_2014}
		and \citet{vandegeer2014} heavily use the KKT condition. Therefore, different arguments are needed to
		establish Theorem \ref{thm_online}.
	}
\end{remark}

\begin{remark}
	\textit{Theorem \ref{thm_online} is established for the proposed online debiased lasso estimators based on the algorithm described in Section \ref{sec:online_debiased_lasso}. Indeed, the proof of Theorem  \ref{thm_online}  uses the specific form of the algorithm. Therefore, this result does not apply to other online estimators computed using a different algorithm. For example, it is not clear whether the estimators based on the online algorithms in
		~\citet{truncated_SGD_Langford_2009} and \citet{online_sparse_Fan_2018} will have similar asymptotic distributional properties.
	}
\end{remark}

\begin{remark}
	\textit{The error terms are assumed to have sub-exponential tails in (A3).
		For sub-Gaussian design matrix $\bm{X}$  in (A1),  the assumption (A3)  is the same as that in the offline setting for
		the asymptotic properties of the debiased lasso estimator in \citet{vandegeer2014}. }
\end{remark}

The requirement on the minimum batch size in Theorem \ref{thm_online} indicates that,  one may apply the online lasso algorithm once the sample size of the first data batch
reaches the order of $ s_r\log p $.  After that, we update the lasso estimators when the size of the newly arrived batch is at the order of $ \log p$. The next theorem
justifies that the order of the subsequent batch size $O(\log p) $ could be relaxed to $ O(1) $, at the price of a relatively stronger condition on $ N_b$.



\begin{theorem}\label{thm_online_2}
	Assume Assumption  \ref{assumption_1} holds.    When the $ j$-th batch data arrives, 	
	suppose that the tuning parameter  $\lambda_j$ satisfies
	$ \lambda_j = C\sqrt{{\log p}/{N_j}} $. 	
	If the first batch size $ {n_1} \geq cs_r\log p $ for some constant $ c $ and
	\begin{eqnarray*}\label{online_additional_assumption_2}
		s_0s_r \sqrt{\frac{\log^3 (p)}{N_b}} = o(1), \ s_r^2{\frac{\sqrt{\log^3 (p)}}{N_b}\log{\frac{N_b}{n_1}}}  = o(1),
	\end{eqnarray*}
	then, for any $ r = 1, \ldots, p $ and sufficiently large $ N_b $,
	\begin{eqnarray*}
		&&({\widehat{\beta}}^{(b)}_{\text{on},r} - { \beta}_{0,r})/\widehat{\tau}^{(b)}_{r}= W_r + \Delta_r,\\
		&&W_r=\frac{\bm{\widehat{z}}_{r}^\top \bm \epsilon}{\lVert\bm{\widehat z}_{r}\lVert_2},\ |\Delta_r| = o_{\mathbb{P}}(1),
	\end{eqnarray*}
	where $ \widehat{\tau}^{(b)}_{r} $ is defined in \eqref{tau_b}.
\end{theorem}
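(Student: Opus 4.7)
The plan follows the scaffolding of Theorem \ref{thm_online}: derive an algebraic decomposition of $\widehat{\beta}^{(b)}_{\text{on},r} - \beta_{0,r}$ into a CLT-governed stochastic term plus a vanishing bias term, then verify the bias is $o_{\mathbb{P}}(1)$ using concentration bounds sharpened to dispense with any lower bound on subsequent batch sizes. Substituting $\bm y^{(j)}=\bm X^{(j)}\bbeta_0+\bepsilon^{(j)}$ into \eqref{online_debiased_algorithm} and splitting $\bm X^{(j)}=[\bm X^{(j)}_{-r},\bm x^{(j)}_r]$ causes the $\widehat\beta^{(b)}_r$ terms to cancel, leaving
\begin{equation*}
\widehat\beta^{(b)}_{\text{on},r}-\beta_{0,r}=\frac{\sum_{j=1}^b(\widehat{\bm z}^{(j)}_r)^\top\bepsilon^{(j)}}{a_1^{(b)}}+\frac{\sum_{j=1}^b(\widehat{\bm z}^{(j)}_r)^\top\bm X^{(j)}_{-r}(\bbeta_{0,-r}-\widehat{\bbeta}^{(b)}_{-r})}{a_1^{(b)}}.
\end{equation*}
Since $a_1^{(b)}\widehat\tau^{(b)}_r=\|\widehat{\bm z}_r\|_2$, dividing by $\widehat\tau^{(b)}_r$ identifies the first fraction with $W_r$ and the second with $\Delta_r$. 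Conditionally on the design, $W_r$ is a normalized sum of independent mean-zero sub-exponential errors, so Lindeberg-Feller gives $W_r\Rightarrow N(0,\sigma_\epsilon^2)$ once $\|\widehat{\bm z}_r\|_2^2\asymp N_b$ holds with high probability; this in turn follows from sub-Gaussian concentration of the population residuals $\bm z^{(j)}_r=\bm x^{(j)}_r-\bm X^{(j)}_{-r}\bgamma_r$ together with the cumulative lasso rate $\|\widehat{\bgamma}^{(j)}_r-\bgamma_r\|_2=O_{\mathbb{P}}(\sqrt{s_r\log p/N_j})$ under (A1)-(A2).

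Bounding $\Delta_r$ reduces by H\"older to
\begin{equation*}
|\Delta_r|\le \|\widehat{\bm z}_r\|_2^{-1}\cdot\|\widehat{\bbeta}^{(b)}-\bbeta_0\|_1\cdot\max_{k\ne r}\Bigl|\sum_{j=1}^b(\widehat{\bm z}^{(j)}_r)^\top\bm X^{(j)}_{-r,k}\Bigr|.
\end{equation*}
The denominator is $\asymp\sqrt{N_b}$ by the above. The cumulative $\ell_1$-rate $\|\widehat{\bbeta}^{(b)}-\bbeta_0\|_1=O_{\mathbb{P}}(s_0\sqrt{\log p/N_b})$ follows from a standard restricted-eigenvalue argument applied to $\bm S^{(b)}/N_b$ with $\lambda_b\asymp\sqrt{\log p/N_b}$; crucially this step needs only the cumulative $N_b$ to be large, not any individual $n_j$, because the restricted eigenvalue is a property of the cumulative Gram matrix.

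The main obstacle, and the source of the $\sqrt{\log^3 p}$ factor relative to Theorem \ref{thm_online}, is bounding the cumulative cross term $\sum_j(\widehat{\bm z}^{(j)}_r)^\top\bm X^{(j)}_{-r,k}$ uniformly in $k$. Because $\widehat{\bm z}^{(j)}_r$ depends only on $\{\mD_1,\dots,\mD_j\}$, no single KKT identity controls the whole sum, as noted in Remark \ref{remark 2}. I would decompose
\begin{equation*}
\sum_j(\bm X^{(j)}_{-r})^\top\widehat{\bm z}^{(j)}_r=\sum_j(\bm X^{(j)}_{-r})^\top\bm z^{(j)}_r+\sum_j(\bm X^{(j)}_{-r})^\top\bm X^{(j)}_{-r}(\bgamma_r-\widehat{\bgamma}^{(j)}_r),
\end{equation*}
bound the first sum by a Bernstein inequality as $O_{\mathbb{P}}(\sqrt{N_b\log p})$ in $\ell_\infty$, and split each summand of the second as $n_j\bm\Sigma_{-r,-r}(\bgamma_r-\widehat{\bgamma}^{(j)}_r)+\{(\bm X^{(j)}_{-r})^\top\bm X^{(j)}_{-r}-n_j\bm\Sigma_{-r,-r}\}(\bgamma_r-\widehat{\bgamma}^{(j)}_r)$. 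Under Theorem \ref{thm_online}'s hypothesis $n_j\ge c\log p$, batch-level $\ell_\infty$ concentration of the centered Gram matrix costs a single $\sqrt{\log p}$ factor per batch; combined with an integral comparison $\sum_j n_j/\sqrt{N_j}\asymp\sqrt{N_b}$ for the cumulative lasso errors and a $\log(N_b/n_1)$ term from a running-supremum control of the lasso rates, this recovers the $\log p$ factor of Theorem \ref{thm_online}. When the batch-size lower bound is dropped, batch-level concentration of the centered Gram matrix is no longer uniformly valid for tiny batches; the remedy is to apply a matrix Bernstein inequality directly to the cumulative centered process $\sum_j\{(\bm X^{(j)}_{-r})^\top\bm X^{(j)}_{-r}-n_j\bm\Sigma_{-r,-r}\}(\bgamma_r-\widehat{\bgamma}^{(j)}_r)$, which forces an additional $\sqrt{\log p}$ factor and produces the $\sqrt{\log^3 p}$ in the hypothesis. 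Combining all three factors yields $|\Delta_r|=O_{\mathbb{P}}(s_0s_r\sqrt{\log^3 p/N_b}+s_r^2 N_b^{-1}\sqrt{\log^3 p}\log(N_b/n_1))$, which is $o_{\mathbb{P}}(1)$ exactly under the stated assumptions.
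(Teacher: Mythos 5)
Your overall architecture matches the paper's: the same decomposition of $({\widehat{\beta}}^{(b)}_{\text{on},r}-\beta_{0,r})/\widehat{\tau}^{(b)}_r$ into $W_r$ plus a remainder, the same H\"older bound $|\Delta_r|\le \lVert\bm{\widehat z}_r\lVert_2^{-1}\,\lVert\bm{\widehat\beta}^{(b)}-\bm\beta_0\lVert_1\,\max_{k\ne r}|\bm{\widehat z}_r^\top\bm x_k|$, and the same observation that the cumulative lasso rate needs only $N_b$, not individual $n_j$. Where you diverge is in the treatment of the cross term: the paper centers $\bm{\widehat z}_r$ at the \emph{offline} projection $\bm{\widetilde z}_r$ (so the first piece is controlled by the KKT condition, as in Lemma~\ref{lemma_3}), whereas you center at the population residual $\bm z_r$ and invoke Bernstein; that substitution is fine and gives the same $O_{\mathbb{P}}(\sqrt{N_b\log p})$ order.

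The genuine gap is in your proposed mechanism for removing the batch-size lower bound. You suggest applying a matrix Bernstein inequality to the cumulative centered process $\sum_j\{(\bm X^{(j)}_{-r})^\top\bm X^{(j)}_{-r}-n_j\bm\Sigma_{-r,-r}\}(\bm\gamma_r-\bm{\widehat\gamma}^{(j)}_r)$, but the weight $\bm\gamma_r-\bm{\widehat\gamma}^{(j)}_r$ is computed from $\{\mathcal{D}_1,\dots,\mathcal{D}_j\}$, which \emph{includes} batch $j$; the summands are therefore neither independent nor a martingale difference sequence, and Bernstein-type bounds do not apply as stated. The paper's actual fix is cruder and dependence-free: drop the per-batch concentration $\lVert\widehat{\bm\Sigma}^{(j)}_{-r}-\bm\Sigma_{-r}\lVert_\infty\lesssim\sqrt{\log p/n_j}$ (which is what required $n_j\gtrsim\log p$) and instead bound $\max_j\lVert\widehat{\bm\Sigma}^{(j)}_{-r}\lVert_\infty=O_{\mathbb{P}}(\log p)$ uniformly by a union bound over entries that are averages of products of sub-Gaussians, valid down to $n_j=1$; this deterministic-style sup-norm bound is then inserted into the existing chains of Lemmas~\ref{lemma_2} and~\ref{lemma_3}, and it is this extra polylog factor that produces the $\sqrt{\log^3 p}$ in the hypotheses. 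Relatedly, you assert $\lVert\bm{\widehat z}_r\lVert_2\asymp\sqrt{N_b}$ without noting that the bound $\sum_j\lVert\bm{\widehat z}^{(j)}_r-\bm z^{(j)}_r\lVert_2^2\le\sum_j n_j\lVert\widehat{\bm\Sigma}^{(j)}_{-r}\lVert_\infty\lVert\bm{\widehat\gamma}^{(j)}_r-\bm\gamma_r\lVert_1^2$ also uses the per-batch Gram sup-norm and needs the same repair once small batches are allowed; your proposal leaves that step unexamined.
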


\begin{remark}
	\textit{The requirement of the first batch size $ n_1 \geq c_1s_r\log p $  in Theorems \ref{thm_online} and  \ref{thm_online_2}
		is needed to establish the consistency of the lasso-typed estimator \citep{buhlmann2011statistics}; otherwise, the error bound of
		$ \bm{\widehat{\gamma}}^{(1)}_{r}$ defined in \eqref{projection_step_j} in the first step cannot be controlled,  resulting in  large error (diverges as $ N_b \to \infty $) in the projection $\bm{\widehat{z}}^{(b)}_{r}$. When there is not enough data at the initial stage, e.g.,  $ n_1 = \log p $,  the error bound of
		$ \bm{\widehat{\gamma}}^{(1)}_{r}$ can also be controlled by  considering some bounded parameter space such as $ \{\bm \gamma:\|\bm \gamma\|_1 \leq C \}  $ for some large constant $ C $ rather than $ \{\bm \gamma:\bm \gamma \in \mathbb{R}^{p-1}  \} $.
	}
\end{remark}


We now consider the case when the covariates are bounded.
For a matrix $\bm{A}=(a_{ij})$, let $\|\bm{A}\|_{\infty}$ be the largest absolute value of its elements, that is,  $\|\bm{A}\|_{\infty}=\max_{i, j}|a_{ij}|$.

\begin{assumption}\label{assumption_2}
	Suppose that 
	
	(B1) The covariates are bounded by a finite constant $K > 0$, namely,
	$ \lVert \bm X \lVert_\infty \leq K,$  where $\bm{X}$ is the design matrix.
	
	(B2) The smallest eigenvalue $ \Lambda^2_{\min} $ of $ \bm \Sigma $ is strictly positive and $ 1/\Lambda^2_{\min} = O(1). $
	Moreover, $ \max_j \Sigma_{j,j} = O(1).$
	
	(B3) $ \lVert \bm{X}_{-r}\bm{\gamma}_{r} \lVert_\infty = O(K) $ and $ \max_{r} \mathbb{E}({z}^4_{r,1})  = O(K^4), $ where ${z}_{r,1} $ is the first element of $ \bm{z}_{r} := (\bm{x}_r - \bm{X}_{-r}\bm{\gamma}_r) $.
\end{assumption}



\begin{theorem}\label{thm_online_3}
	Assume Assumption \ref{assumption_2} holds.
	When the $ j$-th batch data arrives, 	
	suppose that the tuning parameter  $\lambda_j$ satisfies
	$ \lambda_j = C\sqrt{{\log p}/{N_j}} $. 	
	If the first batch size $ {n_1} \geq cs_r^2\log p $ for some constant $ c $ and
	\begin{eqnarray*}\label{online_additional_assumption_3}
		s_0s_r \frac{\log (p)}{\sqrt{N_b}} = o(1), \ s_r^2{\frac{\log (p)}{N_b}\log{{N_b}}} = o(1),
	\end{eqnarray*}
	then, for any $ r = 1, \ldots, p $ and sufficiently large $ N_b $,
	\begin{eqnarray*}
		&&({\widehat{\beta}}^{(b)}_{\text{on},r} - { \beta}_{0,r})/\widehat{\tau}^{(b)}_{r}= W_r + \Delta_r,\\
		&&W_r=\frac{\bm{\widehat{z}}_{r}^\top \bm \epsilon}{\lVert\bm{\widehat z}_{r}\lVert_2},\ |\Delta_r| = o_{\mathbb{P}}(1),
	\end{eqnarray*}
	where $ \widehat{\tau}^{(b)}_{r} $ is defined in \eqref{tau_b}.
\end{theorem}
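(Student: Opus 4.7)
The plan is to mirror the argument sketched (implicitly) for Theorems \ref{thm_online} and \ref{thm_online_2}, replacing the sub-Gaussian concentration tools with their bounded-covariate analogues (Hoeffding and Bernstein), and tracking carefully where the extra factor of $s_r$ in the initial batch size and the $\log N_b$ in the sparsity condition enter. The starting point is the algebraic decomposition obtained by plugging $\bm{y}^{(j)} = \bm{X}^{(j)}\bm{\beta}_0 + \bm{\epsilon}^{(j)}$ into the definition \eqref{online_debiased_algorithm}:
\begin{equation*}
\widehat{\beta}^{(b)}_{\text{on},r} - \beta_{0,r} = \frac{\sum_{j=1}^{b}(\bm{\widehat{z}}^{(j)}_r)^\top \bm{\epsilon}^{(j)}}{\sum_{j=1}^{b}(\bm{\widehat{z}}^{(j)}_r)^\top \bm{x}^{(j)}_r} + \frac{\sum_{j=1}^{b}(\bm{\widehat{z}}^{(j)}_r)^\top \bm{X}^{(j)}_{-r}(\bm{\beta}_{0,-r} - \widehat{\bm{\beta}}^{(b)}_{-r})}{\sum_{j=1}^{b}(\bm{\widehat{z}}^{(j)}_r)^\top \bm{x}^{(j)}_r}.
\end{equation*}
Dividing through by $\widehat{\tau}^{(b)}_r$ reveals the first term as the candidate $W_r$ (up to a scaling that must be verified) and the second term as the bias to be absorbed into $\Delta_r$.

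First I would establish the $\ell_1$ consistency of both the online lasso estimator $\widehat{\bm{\beta}}^{(b)}$ and the online projection coefficients $\widehat{\bm{\gamma}}_r^{(j)}$ under Assumption \ref{assumption_2}. The bounded hypothesis (B1) together with (B2) permits verification of a restricted eigenvalue / compatibility condition for each cumulative sample Gram matrix $\bm{S}^{(j)}/N_j$ using Bernstein's inequality, but the resulting deviation for $\|\bm{S}^{(j)}/N_j - \bm{\Sigma}\|_\infty$ picks up an extra $\sqrt{s_r \log p / N_j}$ factor compared with the sub-Gaussian case. This is exactly why (B1) forces $n_1 \geq c s_r^2 \log p$ rather than $c s_r \log p$: only then does the first-batch projection $\widehat{\bm{\gamma}}_r^{(1)}$ (and hence every subsequent projection) satisfy $\|\widehat{\bm{\gamma}}_r^{(j)} - \bm{\gamma}_r\|_1 = O_{\mathbb{P}}(s_r \sqrt{\log p / N_j})$ and $\|\widehat{\bm{\gamma}}_r^{(j)} - \bm{\gamma}_r\|_2^2 = O_{\mathbb{P}}(s_r \log p / N_j)$. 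The standard lasso oracle bound then gives $\|\widehat{\bm{\beta}}^{(b)} - \bm{\beta}_0\|_1 = O_{\mathbb{P}}(s_0 \sqrt{\log p / N_b})$.

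Next I would control the bias term. By Hölder's inequality it is bounded above by
\begin{equation*}
\frac{\bigl\|\sum_{j=1}^{b}(\bm{X}^{(j)}_{-r})^\top \bm{\widehat{z}}^{(j)}_r\bigr\|_\infty}{\bigl|\sum_{j=1}^{b}(\bm{\widehat{z}}^{(j)}_r)^\top \bm{x}^{(j)}_r\bigr|} \,\|\widehat{\bm{\beta}}^{(b)}_{-r} - \bm{\beta}_{0,-r}\|_1.
\end{equation*}
The denominator, after normalization by $N_b$, concentrates around $\mathbb{E}[z_{r,1}^2]$, bounded below by (B2). For the numerator, the offline KKT condition is unavailable because $\widehat{\bm{\gamma}}_r^{(j)}$ is computed only from $\{\mathcal{D}_1,\ldots,\mathcal{D}_j\}$, not the full sample. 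I therefore write $\sum_j (\bm{X}^{(j)}_{-r})^\top \bm{\widehat{z}}^{(j)}_r = \sum_j (\bm{X}^{(j)}_{-r})^\top (\bm{x}^{(j)}_r - \bm{X}^{(j)}_{-r}\bm{\gamma}_r) + \sum_j (\bm{X}^{(j)}_{-r})^\top \bm{X}^{(j)}_{-r}(\bm{\gamma}_r - \widehat{\bm{\gamma}}_r^{(j)})$ and bound each piece. For the first piece I use Bernstein's inequality per coordinate, with the moment bound $\mathbb{E}(z_{r,1}^4) = O(K^4)$ from (B3) playing the role of the sub-Gaussian variance proxy. For the second piece, I apply a union bound across the at most $N_b/n_1 \leq N_b$ distinct values of $j$, which under bounded covariates replaces the $\log(N_b/n_1)$ factor in Theorems \ref{thm_online}--\ref{thm_online_2} with the weaker $\log N_b$ now appearing in the statement. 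Combined with the $\ell_1$ rate of $\widehat{\bm{\beta}}^{(b)}$ and the formula for $\widehat{\tau}^{(b)}_r$, the two sparsity conditions $s_0 s_r \log(p)/\sqrt{N_b} = o(1)$ and $s_r^2 \log(p) \log(N_b)/N_b = o(1)$ are exactly what is needed to drive the bias over $\widehat{\tau}^{(b)}_r$ to zero.

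Finally, I would verify that $W_r = \bm{\widehat z}_r^\top \bm{\epsilon}/\|\bm{\widehat z}_r\|_2$ controls the first term after division by $\widehat{\tau}^{(b)}_r$, by showing $\sum_j (\bm{\widehat z}^{(j)}_r)^\top \bm{x}^{(j)}_r / \|\bm{\widehat z}_r\|_2^2 \to 1$ in probability using (B2)--(B3) and the projection consistency of Step 2. The main obstacle will be Step 3: because the online projection $\widehat{\bm{\gamma}}_r^{(j)}$ does \emph{not} satisfy the KKT condition relative to the full design $(\bm{X}^{(1)\top},\ldots,\bm{X}^{(b)\top})^\top$, the sharp cancellation exploited in the offline debiased-lasso analysis is lost and must be replaced by explicit batchwise concentration under Bernstein tails, which is what ultimately dictates the stronger conditions $n_1 \geq c s_r^2 \log p$ and the appearance of $\log N_b$ rather than $\log(N_b/n_1)$.
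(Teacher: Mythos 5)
Your proposal is correct in substance and shares the paper's overall architecture --- decompose the error into a noise term plus a bias term, prove $\ell_1$-consistency of the online lasso and projection coefficients under the bounded design, show $\lVert\bm{\widehat z}_r\lVert_2=\Omega(\sqrt{N_b})$, and kill the bias using the harmonic-sum bounds of Lemma \ref{lemma_1_3} --- but it genuinely diverges from the paper at the key step of bounding $\sum_{k\neq r}\bm{\widehat z}_r^\top\bm{x}_k(\widehat\beta^{(b)}_k-\beta_{0,k})$. The paper (Lemma \ref{lemma_3}) centers this term at the \emph{full-sample} offline projection $\bm{\widetilde z}_r=\bm{x}_r-\bm{X}_{-r}\bm{\widehat\gamma}^{(b)}_r$, so that the offline piece is controlled by the KKT condition $\max_{k\neq r}|\bm{\widetilde z}_r^\top\bm{x}_k|\le N_b\lambda_b$ and only the remainder $\bm{\widehat z}_r-\bm{\widetilde z}_r$ needs the batchwise $\ell_1$ rates; you instead center at the \emph{population} projection $\bm{z}_r=\bm{x}_r-\bm{X}_{-r}\bm{\gamma}_r$ and replace the KKT bound by a coordinatewise Bernstein bound on $\max_{k}|\bm{z}_r^\top\bm{x}_k|$, which is legitimate here because $\mathbb{E}[z_{r,1}x_{k,1}]=0$ by definition of $\bm{\gamma}_r$ and (B1)/(B3) make the summands bounded. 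Both routes yield $O_{\mathbb{P}}(s_0\log p)$ for the centered piece and $O_{\mathbb{P}}(s_0s_r\log p)$ for the remainder; your route avoids the KKT condition entirely and exploits the main simplification of the bounded case, namely $\|\widehat{\bm\Sigma}^{(j)}_{-r}\|_\infty\le K^2$ deterministically, which is exactly why no lower bound on $n_j$ for $j\ge2$ is needed. Three small corrections: (i) the $\log N_b$ in the hypothesis does not arise from a union bound over the batch index but from the deterministic inequality $\sum_j n_j/N_j\le 1+\log(N_b/n_1)\le 1+\log N_b$ of Lemma \ref{lemma_1_3}; (ii) the final step verifying $\sum_j(\bm{\widehat z}^{(j)}_r)^\top\bm{x}^{(j)}_r/\lVert\bm{\widehat z}_r\lVert_2^2\to1$ is unnecessary, since dividing by $\widehat\tau^{(b)}_r$ as defined in \eqref{tau_b} cancels $\sum_j(\bm{\widehat z}^{(j)}_r)^\top\bm{x}^{(j)}_r$ exactly and leaves $W_r$ with no approximation; (iii) your projection rate $\lVert\bm{\widehat\gamma}^{(j)}_r-\bm{\gamma}_r\lVert_1=O_{\mathbb{P}}(s_r\sqrt{\log p/N_j})$ differs from the $cs_r^2\lambda_j$ announced in the paper's own proof sketch, but yours is the rate that is actually consistent with the stated conditions $s_0s_r\log(p)/\sqrt{N_b}=o(1)$ and $s_r^2\log(p)\log(N_b)/N_b=o(1)$, so no harm is done.
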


Theorems \ref{thm_online_2} and \ref{thm_online_3} are established without specific assumptions on data batch sizes except for the first batch. Comparing to Theorem~\ref{thm_online_2},
Theorem \ref{thm_online_3} requires a relatively stronger condition on  $ n_1 $, but a more relaxed condition on the cumulative sample size $ N_b$.
Furthermore, rewriting $ (\widehat{\sigma}^2_{\epsilon})^{(b)}$ as
$ 	(\widehat{\sigma}^2_{\epsilon})^{(b)} = ({1}/{N_b})\sum_{j=1}^{b} (\by^{(j)} - \bX^{(j)}\widehat{\bbeta}^{(j)})^\top (\by^{(j)} - \bX^{(j)}\widehat{\bbeta}^{(j)}), $ we can see that
$ \widehat{\sigma}^{(b)}_\epsilon $ is consistent for $ \sigma_\epsilon $  in view of  the consistency of $ \widehat{\bbeta}^{(b)} $ in Lemma \ref{lemma_1_2}.

The proofs of Theorems  \ref{thm_online}--\ref{thm_online_3} are  included in the  appendix.
According to Theorems \ref{thm_online} - \ref{thm_online_3},   $ W_r $ is asymptotically normal through verifying the conditions of the Lindeberg central limit theorem.
As a result, for any  $0< \alpha < 1$,
a $ (1 - \alpha)\% $ confidence interval for $\beta_{0, r}$ is
\begin{equation*}
	{\widehat{\beta}}^{(b)}_{\text{on},r} \pm  \Phi^{-1}(1 - \frac{\alpha}{2}) (\widehat{\sigma}^{(b)}_\epsilon \widehat{\tau}^{(b)}_{r}),
\end{equation*}
where $ \widehat{\sigma}^{(b)}_\epsilon $ is defined in \eqref{eq:sigma_sq},   $\Phi(\cdot)$ is the cumulative distribution function of the standard normal distribution and $\Phi^{-1}$ is its inverse function.

\section{Simulation experiments}\label{sec:sim}
\subsection{Setup}
In this section, we conduct simulation studies to examine the finite-sample performance of the proposed online debiasing procedure in high-dimensional linear models. We randomly generate a total of $N_b$ samples arriving in a sequence of $b$ data batches, denoted by $\{\mathcal{D}_1,\dots,\mathcal{D}_b\}$, from
\[
y^{(j)}_i = \bbeta_0^\top\bx_i^{(j)}+ \epsilon_i^{(j)}, \ i=1,\dots,n_j; \ j = 1,\dots,b,
\]
where $\epsilon_i\overset{iid}{\sim}\mathcal{N}({0},\sigma^2_{\epsilon})$, $\bx_i^{(j)}\sim \mathcal{N}(\bm{0},\bSigma)$, and $\bm{\beta}_0\in\mathbb{R}^p$ is a $p$-dimensional sparse parameter vector. Recall that $s_0$ is the number of non-zero components of $\bbeta_0$.
We set half of the nonzero coefficients to be $1$ (relatively strong signals),  and another half to be $0.01$ (weak signals).
We consider the following settings: (i) $N_b=420$, $b=12$, $n_j=35$ for  $j=1, \ldots, 12$, $p=400$ and $s_0=6$; (ii) $N_b=1200$, $b=12$, $n_j=100$ for $j=1,\ldots, 12$, $p=1000$ and $s_0=20$. Under each setting,  two types of  $\bSigma$ are considered:
(a) $\bSigma=\bI_p$; (b) $\bSigma = \{0.5^{|i-j|} \}_{i,j=1,\dots,p}$. We set the step size in gradient descent  $\eta=0.005$ in case (i) and $\eta=0.05$ in case (ii).

The objective is to conduct both estimation and inference along the arrival of a sequence of data batches. The evaluation criteria include:  averaged absolute bias in estimating $\bbeta_0$ (A.bias);  averaged estimated standard error (ASE);  empirical standard error (ESE); coverage probability (CP)
of the 95\% confidence intervals;  averaged length of the 95\% confidence interval (ACL). These quantities will be evaluated separately for three groups: (i) $\beta_{0,r}=0$, (ii) $\beta_{0,r} = 0.01$ and (iii) $\beta_{0,r}=1$. Comparison is made between our proposed online debiased lasso at several intermediate points from $j=1,\dots,b$ and the ordinary least squares (OLS) estimator at the terminal point $b$ where $N_b>p$. We include the OLS method using R package~\texttt{lm} as a benchmark for comparison. 
The results are reported in Tables~\ref{tab:p_400_ind}-\ref{tab:p_1000_AR1}.

\subsection{Bias and coverage probability}
It can be seen from Tables~\ref{tab:p_400_ind}-\ref{tab:p_1000_AR1} that the estimation bias of the online debiased lasso decreases rapidly as the number of data batches $b$ increasing from $2$ to $12$. Both the estimated standard errors and averaged length of 95\% confidence intervals exhibit similar decreasing trend over time. Even though the coverage probabilities of the confidence intervals by the OLS at the end point are around the nominal 95\% level, both the estimation bias and standard errors of OLS estimator are much larger than those of online debiased lasso. In particular, the estimation bias of OLS could even be 10 times that of online debiased lasso when $p=400$ as shown in Tables~\ref{tab:p_400_ind} and~\ref{tab:p_400_AR1}. Furthermore, it is worth noting that even though the coverage probability of both estimators reaches the nominal level at the terminal point, the ACL of OLS is about 2 to 4 times the one of online debiased lasso. Such a loss of statistical efficiency by OLS further demonstrates the advantage of our proposed online debiased method under the  high-dimensional sparse regression setting with streaming datasets.

To visualize the asymptotic normality of our proposed online debiasing estimator, we plot the proposed online debiased lasso estimates at several intermediate points $b=2, 6, 10$ against the theoretical quantiles of a standard normal distribution in Figure~\ref{fig:QQ_400}. Similar plots for more settings with $N_b=1200$ and $p=1000$ are provided in the Appendix. In these Q-Q plots, the scattered points summarized from 200 replications stay closely along the $45^\circ$ diagonal blue line, indicating the validity of asymptotic normal distribution. Furthermore, such trend becomes clearer as $b$ increases.

By comparing across different signal groups, i.e. $\beta_{0,r}=0, 0.01, 1$, we observe that both ASE and ACL are quite close to each other and even coincide when $N_b=1200$,  as shown in Tables~\ref{tab:p_1000_ind}-\ref{tab:p_1000_AR1}. We believe this is reasonable, as each column in the design matrix, denoted by $\bx_r\in\mathbb{R}^{N_b\times 1}$, is of the same marginal distribution, and thus the estimated standard errors computed according to \eqref{tau_b}  in the simulations are identical up to a certain decimal for every component in $\bbeta$,  regardless of signal strength.

\begin{table}
	\caption{\label{tab:p_400_ind} $N_b=420$, $b=12$, $p=400$, $s_0=6$, $\bSigma= \bI_p$. Performance on statistical inference. Tuning parameter $\lambda$ is chosen from $T_\lambda= \{0.15, 0.20, 0.25, 0.30 \}$ using the adaptive method in Section~\ref{sec: tuning parameter}. Simulation results are summarized over 200 replications. In the table, we report the $\lambda$ selected with highest frequency among 200 replications.}
	\centering
	\begin{tabular}{l l| c | cccccc}
		&$\beta_{0,r}$ &OLS  &\multicolumn{6}{c}{online debiased lasso} \\
		data batch index &&&2  &4 &6 &8  &10 &12 \\
		$\lambda$ && &0.30  &0.30  &0.20  &0.15 &0.15 &0.15 \\
		\hline
		\hline
		\multirow{3}{*}{A.bias} &0 
		&0.013
		&0.008 &0.005 &0.004 &0.004 &0.003 &0.003\\
		&0.01 
		&0.026
		&0.007 &0.009 &0.008 &0.003 &0.002 &0.002\\
		&1 
		&0.016
		&0.152 &0.022 &0.009 &0.005 &0.002 &0.001\\
		\hline
		\multirow{3}{*}{ASE}  &0 
		&0.223
		&0.119 &0.091 &0.074 &0.063 &0.056 &0.051\\
		&0.01  
		&0.226
		&0.119 &0.091 &0.074 &0.063 &0.056 &0.051\\
		&1    
		&0.222
		&0.118 &0.091 &0.074 &0.063 &0.056 &0.051\\
		\hline
		\multirow{3}{*}{ESE}  &0 
		&0.229
		&0.140 &0.096 &0.073 &0.062 &0.055 &0.051 \\
		&0.01 
		&0.235
		&0.014 &0.094 &0.070 &0.060 &0.057 &0.051\\
		&1  
		&0.226
		&0.171 &0.102 &0.079 &0.067 &0.057 &0.053\\
		\hline
		\multirow{3}{*}{CP}   &0 
		&0.934
		&0.901 &0.936 &0.953 &0.953 &0.952 &0.951\\
		&0.01  
		&0.947
		&0.903 &0.943 &0.952 &0.955 &0.943 &0.943\\
		&1  
		&0.940
		&0.683 &0.902 &0.933 &0.935 &0.948 &0.948\\
		\hline
		\multirow{3}{*}{ACL}   &0 
		&0.874
		&0.465 &0.356 &0.289 &0.247 &0.220 &0.199\\
		&0.01  
		&0.886
		&0.467 &0.356 &0.289 &0.248 &0.221 &0.200\\
		&1  
		&0.871
		&0.464 &0.356 &0.289 &0.247 &0.220 &0.199\\
		\hline
	\end{tabular}
\end{table}

\begin{table}
	\caption{\label{tab:p_400_AR1} $N_b=420$, $b=12$, $p=400$, $s_0=6$, $\bSigma = \{0.5^{|i-j|} \}_{i,j=1,\dots,p}$. Performance on statistical inference. Tuning parameter $\lambda$ is chosen from $T_\lambda= \{0.15, 0.20, 0.25, 0.30 \}$ using the adaptive method in Section~\ref{sec: tuning parameter}. Simulation results are summarized over 200 replications. In the table, we report the $\lambda$ selected with highest frequency among 200 replications.}
	\centering
	\begin{tabular}{l l| c| ccc ccc}
		&$\beta_{0,r}$ &OLS  &\multicolumn{6}{c}{online debiased lasso} \\
		data batch index &&&2  &4 &6 &8  &10  &12\\
		$\lambda$ &&&0.30 &0.30  &0.20 &0.15 &0.15 &0.15\\
		\hline
		\hline
		\multirow{3}{*}{A.bias} &0 
		&0.018
		&0.011 &0.009 &0.006 &0.005 &0.005 &0.004\\
		&0.01 
		&0.015
		&0.004 &0.004 &0.002 &0.003 &0.004 &0.004\\
		&1 
		&0.019
		&0.179 &0.048 &0.022 &0.007 &0.004 &0.004\\
		\hline
		\multirow{3}{*}{ASE}  &0 
		&0.288
		&0.120 &0.093 &0.076 &0.066 &0.060 &0.054\\
		&0.01  
		&0.287
		&0.120 &0.092 &0.076 &0.066 &0.060 &0.054 \\
		&1    
		&0.287
		&0.121 &0.093 &0.076 &0.066 &0.060 &0.054\\
		\hline
		\multirow{3}{*}{ESE}  &0 
		&0.295
		&0.139 &0.096 &0.075 &0.065 &0.059 &0.054\\
		&0.01 
		&0.290
		&0.134 &0.098 &0.075 &0.066 &0.060 &0.056\\
		&1  
		&0.306
		&0.161 &0.104 &0.079 &0.067 &0.059 &0.055\\
		\hline
		\multirow{3}{*}{CP}   &0 
		&0.934
		&0.904 &0.937 &0.952 &0.953 &0.951 &0.950\\
		&0.01  
		&0.945
		&0.925 &0.945 &0.958 &0.956 &0.941 &0.946\\
		&1  
		&0.931
		&0.655 &0.899 &0.918 &0.945 &0.958 &0.955\\
		\hline
		\multirow{3}{*}{ACL}   &0 
		&1.127
		&0.472 &0.363 &0.299 &0.261 &0.233 &0.213\\
		&0.01  
		&1.126
		&0.472 &0.362 &0.298 &0.261 &0.234 &0.213\\
		&1  
		&1.127
		&0.474 &0.363 &0.299 &0.261 &0.233 &0.213\\
		\hline
	\end{tabular}
\end{table}

\begin{figure}
	\centering
	\includegraphics[width=\linewidth]{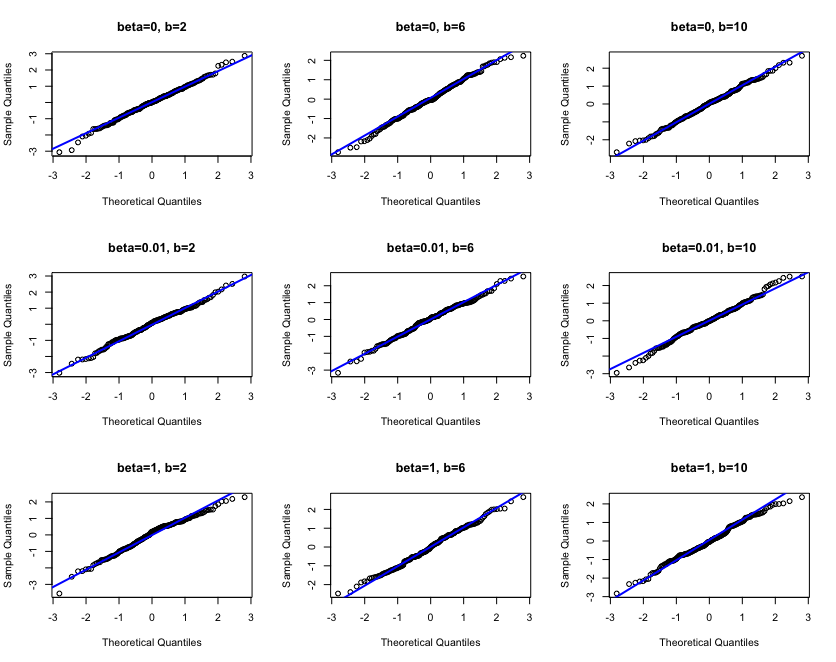}
	\caption{\label{fig:QQ_400}QQ plots of standardized $\widehat{\beta}_{\text{on},r}^{(b)}$ with total sample size $N_b=420$, $p=400$ and $\bSigma= \bI_p$. Each column represents the estimated parameter $\widehat{\beta}_{\text{on},r}^{(b)}$ at data batches $b=2, 6, 10$. Each row corresponds to a true value of parameter $\bbeta_0$, i.e. $\beta_{0,r}=0, 0.01, 1$. }
\end{figure}

\begin{table}
	\caption{\label{tab:p_1000_ind} $N_b=1200$, $b=12$, $p=1000$, $s_0=20$, $\bSigma= \bI_p$. Performance on statistical inference. Tuning parameter $\lambda$ is chosen from $T_\lambda= \{0.15, 0.20, 0.25, 0.30 \}$ using the adaptive method in Section~\ref{sec: tuning parameter}. Simulation results are summarized over 200 replications. In the table, we report the $\lambda$ selected with highest frequency among 200 replications.}
	\centering
	\begin{tabular}{l l|  c | cccccc}
		&$\beta_{0,r}$ &OLS  &\multicolumn{6}{c}{online debiased lasso} \\
		data batch index &&&2  &4 &6 &8  &10  &12\\
		$\lambda$ && &0.25 &0.15  &0.15 &0.15 &0.15 &0.15\\
		\hline
		\hline
		\multirow{3}{*}{A.bias} &0 
		&0.004
		&0.005 &0.003 &0.002 &0.002 &0.002 &0.001\\
		&0.01 
		&0.004
		&0.005 &0.004 &0.002 &0.002 &0.001 &0.001\\
		&1 
		&0.003
		&0.019 &0.006 &0.004 &0.003 &0.003 &0.002\\
		\hline
		\multirow{3}{*}{ASE}  &0 
		&0.071
		&0.083 &0.056 &0.045 &0.039 &0.035 &0.032 \\
		&0.01  
		&0.071
		&0.083 &0.056 &0.045 &0.039 &0.035 &0.032\\
		&1    
		&0.071
		&0.083 &0.056 &0.045 &0.039 &0.035 &0.032\\
		\hline
		\multirow{3}{*}{ESE}  &0 
		&0.071
		&0.088 &0.055 &0.045 &0.039 &0.035 &0.032\\
		&0.01 
		&0.072
		&0.087 &0.054 &0.046 &0.039 &0.036 &0.033\\
		&1  
		&0.072
		&0.095 &0.056 &0.046 &0.039 &0.035 &0.032\\
		\hline
		\multirow{3}{*}{CP}   &0 
		&0.947
		&0.933 &0.956 &0.952 &0.951 &0.950 &0.950\\
		&0.01  
		&0.946
		&0.939 &0.961 &0.947 &0.948 &0.946 &0.946\\
		&1  
		&0.947
		&0.906 &0.941 &0.940 &0.950 &0.950 &0.953 \\
		\hline
		\multirow{3}{*}{ACL}   &0 
		&0.277
		&0.325 &0.220 &0.178 &0.154 &0.137 &0.125\\
		&0.01  
		&0.277
		&0.325 &0.220 &0.178 &0.154 &0.137 &0.125\\
		&1  
		&0.278
		&0.325 &0.220 &0.178 &0.154 &0.137 &0.125\\
		\hline
	\end{tabular}
\end{table}

\begin{table}	
	\caption{\label{tab:p_1000_AR1} $N_b=1200$, $b=12$, $p=1000$, $s_0=20$, $\bSigma = \{0.5^{|i-j|} \}_{i,j=1,\dots,p}$. Performance on statistical inference. Tuning parameter $\lambda$ is chosen from $T_\lambda= \{0.15, 0.20, 0.25, 0.30 \}$ using the adaptive method in Section~\ref{sec: tuning parameter}. Simulation results are summarized over 200 replications. In the table, we report the $\lambda$ selected with highest frequency among 200 replications.}
	\centering
	\begin{tabular}{l l|c | cccccc}
		&$\beta_{0,r}$ &OLS  &\multicolumn{6}{c}{online debiased lasso} \\
		data batch index &&&2  &4 &6 &8  &10 &12\\
		$\lambda$ && &0.30 &0.25  &0.15 &0.15 &0.15 &0.15 \\
		\hline
		\hline
		\multirow{3}{*}{A.bias} &0 
		&0.005   
		&0.007 &0.004 &0.004 &0.004 &0.003 &0.003 \\
		&0.01 
		&0.005   
		&0.006 &0.003 &0.003 &0.003 &0.002 &0.002\\
		&1 
		&0.005   
		&0.017 &0.005 &0.002 &0.002 &0.002 &0.001 \\
		\hline
		\multirow{3}{*}{ASE}  &0 
		&0.091 
		&0.087 &0.060 &0.049 &0.043 &0.038  &0.035\\
		&0.01  
		&0.091 
		&0.086 &0.060 &0.049 &0.043 &0.038 &0.035\\
		&1    
		&0.091 
		&0.087 &0.060 &0.049 &0.043 &0.038 &0.035\\
		\hline
		\multirow{3}{*}{ESE}  &0 
		&0.092 
		&0.091 &0.059 &0.049 &0.043 &0.038 &0.035  \\
		&0.01 
		&0.090 
		&0.091 &0.059 &0.049 &0.043 &0.039 &0.035\\
		&1  
		&0.090 
		&0.095 &0.060 &0.049 &0.042 &0.038 &0.034\\
		\hline
		\multirow{3}{*}{CP}   &0 
		&0.946 
		&0.935 &0.953 &0.949 &0.948 &0.947 &0.946 \\
		&0.01  
		&0.956 
		&0.934 &0.952 &0.949 &0.946 &0.947 &0.948\\
		&1  
		&0.949  
		&0.921 &0.948 &0.950 &0.956 &0.950 &0.958\\
		\hline
		\multirow{3}{*}{ACL}   &0 
		&0.358 
		&0.339 &0.236 &0.193 &0.168 &0.150 &0.137\\
		&0.01  
		&0.358 
		&0.339 &0.236 &0.193 &0.168 &0.150 &0.137\\
		&1  
		&0.357  
		&0.339 &0.236 &0.193 &0.168 &0.150 &0.137\\
		\hline
	\end{tabular}
\end{table}

\section{Applications}
\subsection{Analysis of  Beijing PM 2.5 data}\label{sec:data}
We apply the proposed ODL to analyze the Beijing PM2.5 Data by \cite{liang2015assessing}, which is available in UC Irvine Machine Learning Repository. Fine particulate matter less than 2.5 microns (PM2.5) is an air pollutant that threatens human health. Therefore, understanding the changes of PM 2.5 level is an important issue. The dataset contains hourly PM2.5 records from 1 January 2010 to 31 December 2014, in conjunction with 5 meteorological features. We are interested in whether the meteorological variables, such as the wind direction, have an influence on PM 2.5 level.

Before applying our online algorithm, we first preprocess the original raw data. We transform the categorical predictors into the one-hot vector. We also include interaction terms, which are coded as products of all pairs of the original features. As a result, the dimension of the feature vector is
$  p = 296 $. Since the curve of an exponential distribution fits the PM 2.5 data well, we use the logarithm of PM 2.5 as the response variable. In addition, we split the data into $ b = 120 $ batches fairly by its chronological order.  Each batch contains  half-month data with size $ n_j = 348, j = 1,\ldots, b$.

First, we examine the influence of wind direction. There are 4 types of wind directions: northwest (NW), northeast (NE), southeast (SE), and calm and variable (cv). The results are shown in Figure~\ref{fig:real_wind_season}. From the left panel, we can observe that in the most cases, SE wind has a positive influence on PM 2.5 while NE and NW has a negative impact. This observation is consistent with the statement in~\cite{liang2015assessing}. The major heavily polluting industries are located at the south and east of Beijing, but the north region lacks industries of this kind. Besides, another interesting observation is that comparing to other seasons, all wind directions are not significant on decreasing the level of PM 2.5 in winter and the SE wind even has positive effect on increasing the level of PM 2.5.
One possible explanation is the heating supply in northern China in winter. At that time, coals are burned to provide the heat which significantly increases the PM 2.5 level in the whole region. In the presence of coal burning, wind directions are insignificant variables.  In the middle panel, we present the estimated standard errors. As expected, the standard errors decrease as the number of batches increases. Combining the results in the left and middle panels, we present the $ t$-statistic on the right panel.

\begin{figure}
	\centering
	\includegraphics[width=0.31\textwidth]{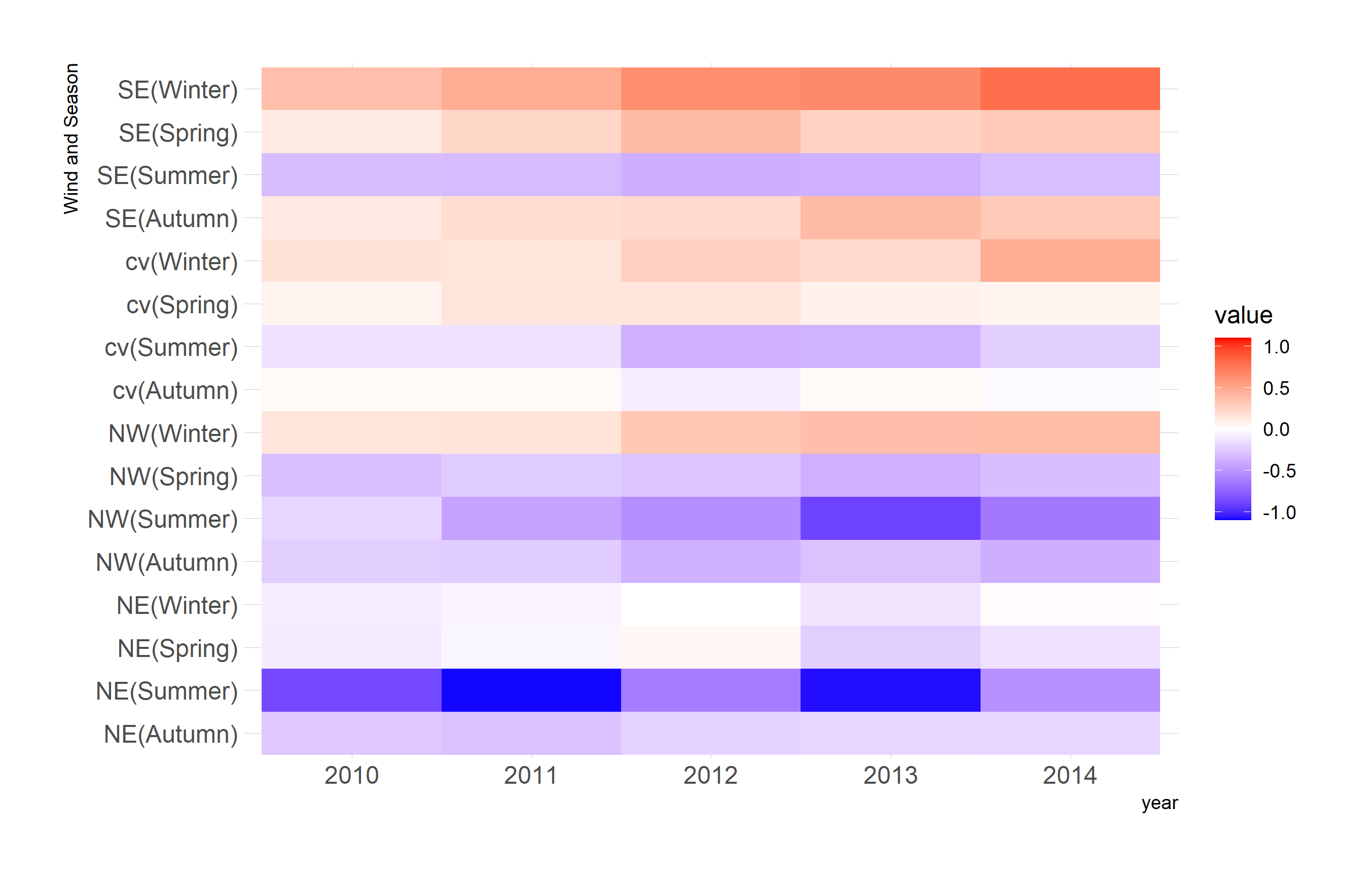}
	\includegraphics[width=0.31\textwidth]{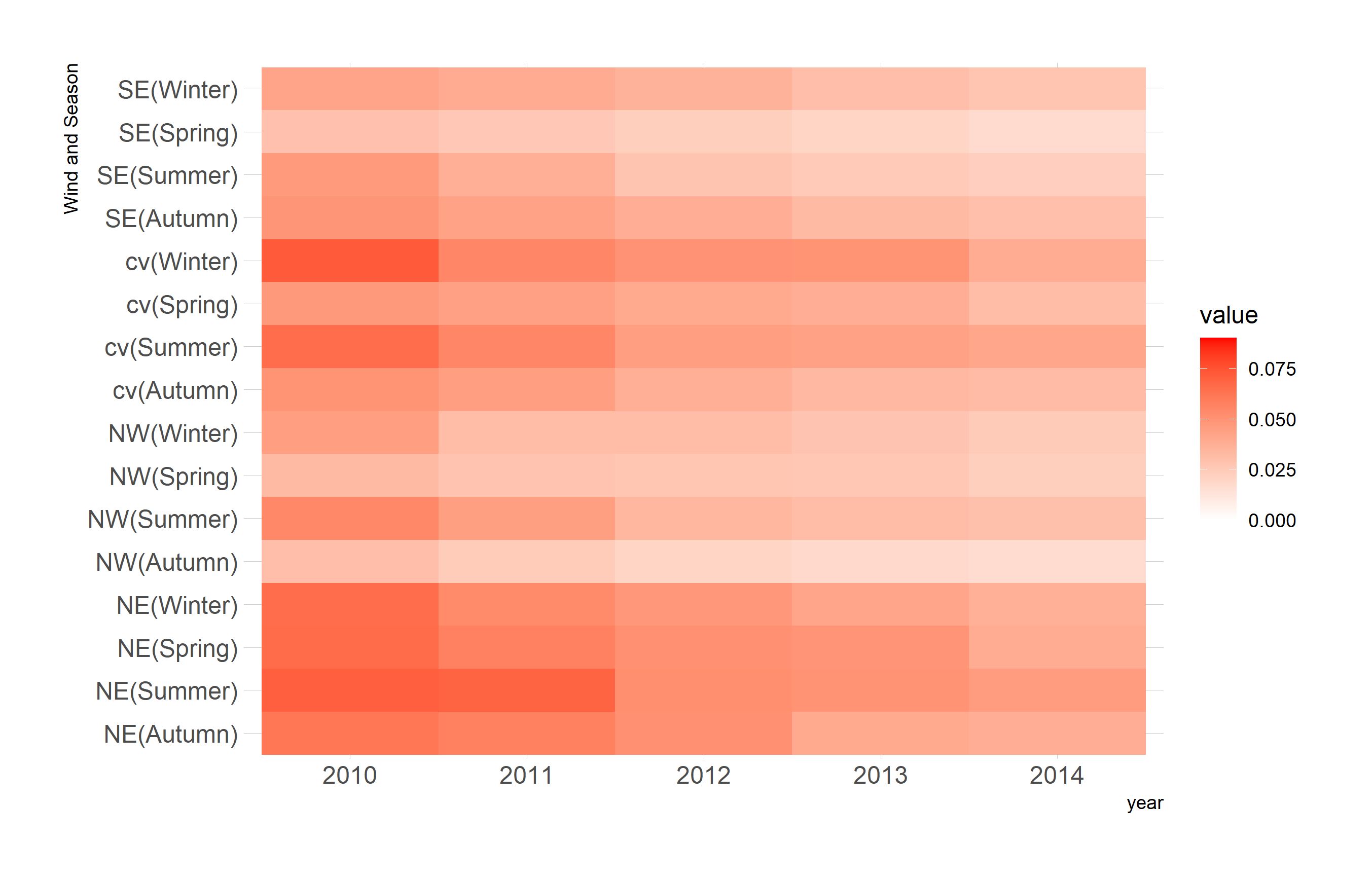}
	\includegraphics[width=0.31\textwidth]{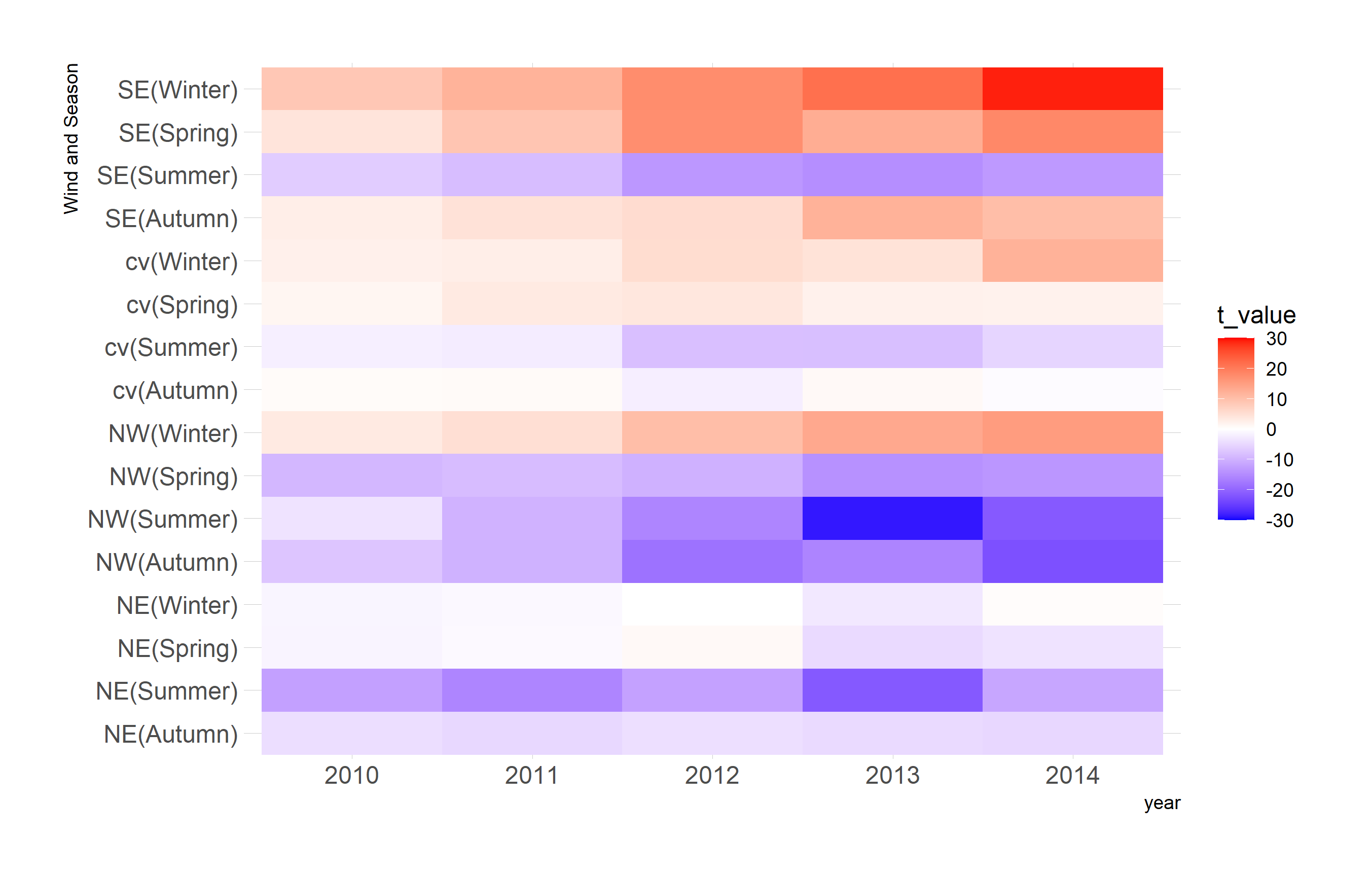}
	\caption{\label{fig:real_wind_season} The influence of wind direction in different seasons. Left panel: the heat map of the estimated coefficients at the end of a year. Middle panel: the corresponding estimated standard errors. Right panel: the  $ t$-statistic.}
\end{figure}

Next, we focus on another two variables: pressure and dew point. The results are shown in Figure \ref{fig:real_PD_season}. It can be seen that  the increase of dew point is associated with the increase of PM 2.5 except in the summer. On the contrary, apart from the summer, the pressure itself has a negative impact on the PM 2.5. This finding also agrees with the study in \cite{liang2015assessing}. The main difference is on the influence of dew point in summer time. We believe the difference arises from the interaction terms. Actually, the coefficient of the square of the dew point is significantly positive, and its estimated standard error is similar to the middle panel in Figure \ref{fig:real_wind_season}. Both of them have a decreasing trend. The values of $t$-statistic are also presented on the right panel.
For ease of illustration, we also present the trace of the outcomes on the wind direction, pressure and dew point to show the trend of the estimation with the influx of new data. The result is presented in Figure \ref{fig:trace}. Moreover, we identify other significant variables such as the wind speed and some interaction terms in this analysis.  These findings suggest some interesting covariates that warrant further investigation and validation.

\begin{figure}
	\centering
	\includegraphics[width=0.31\textwidth]{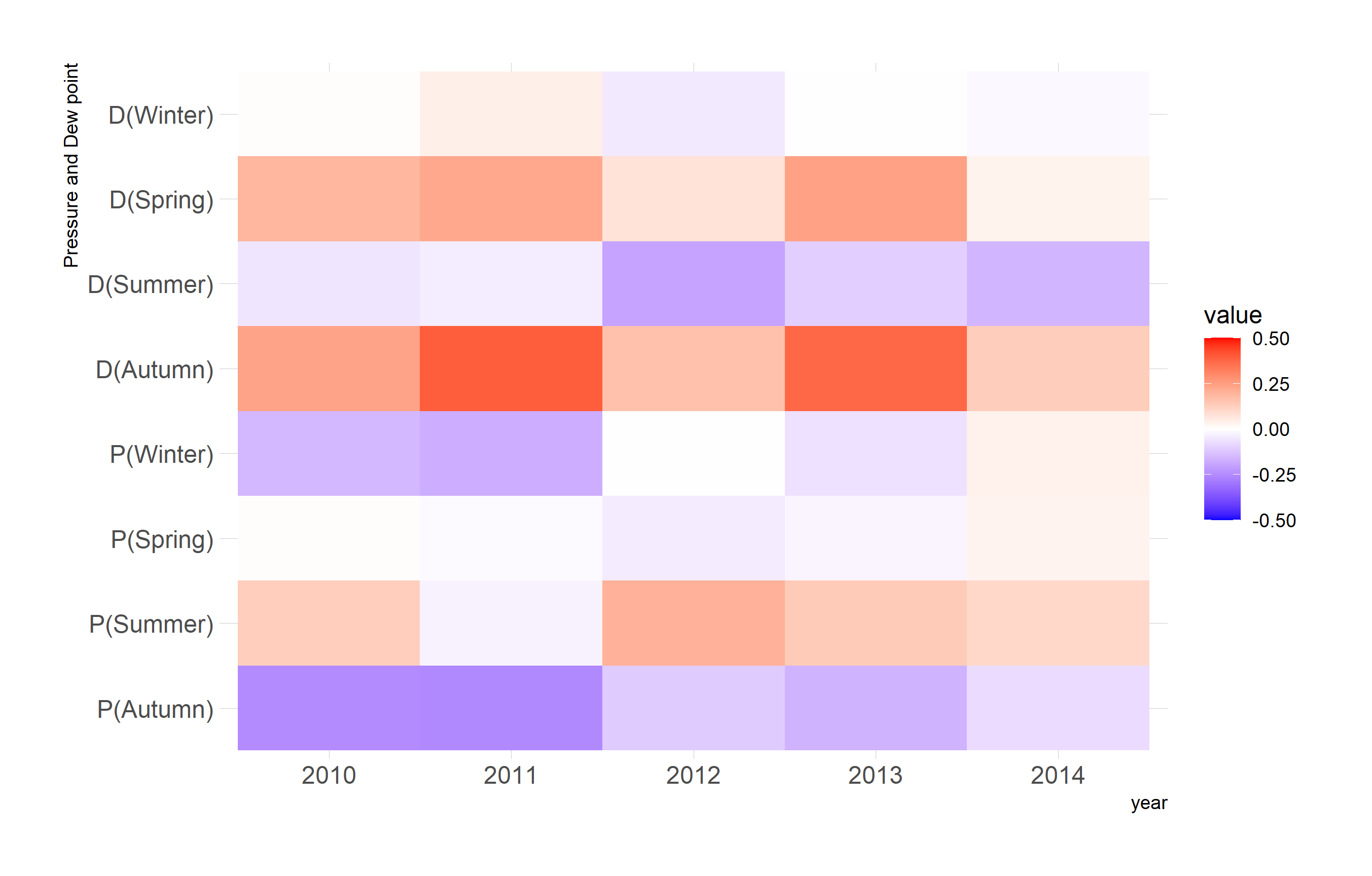}
	\includegraphics[width=0.31\textwidth]{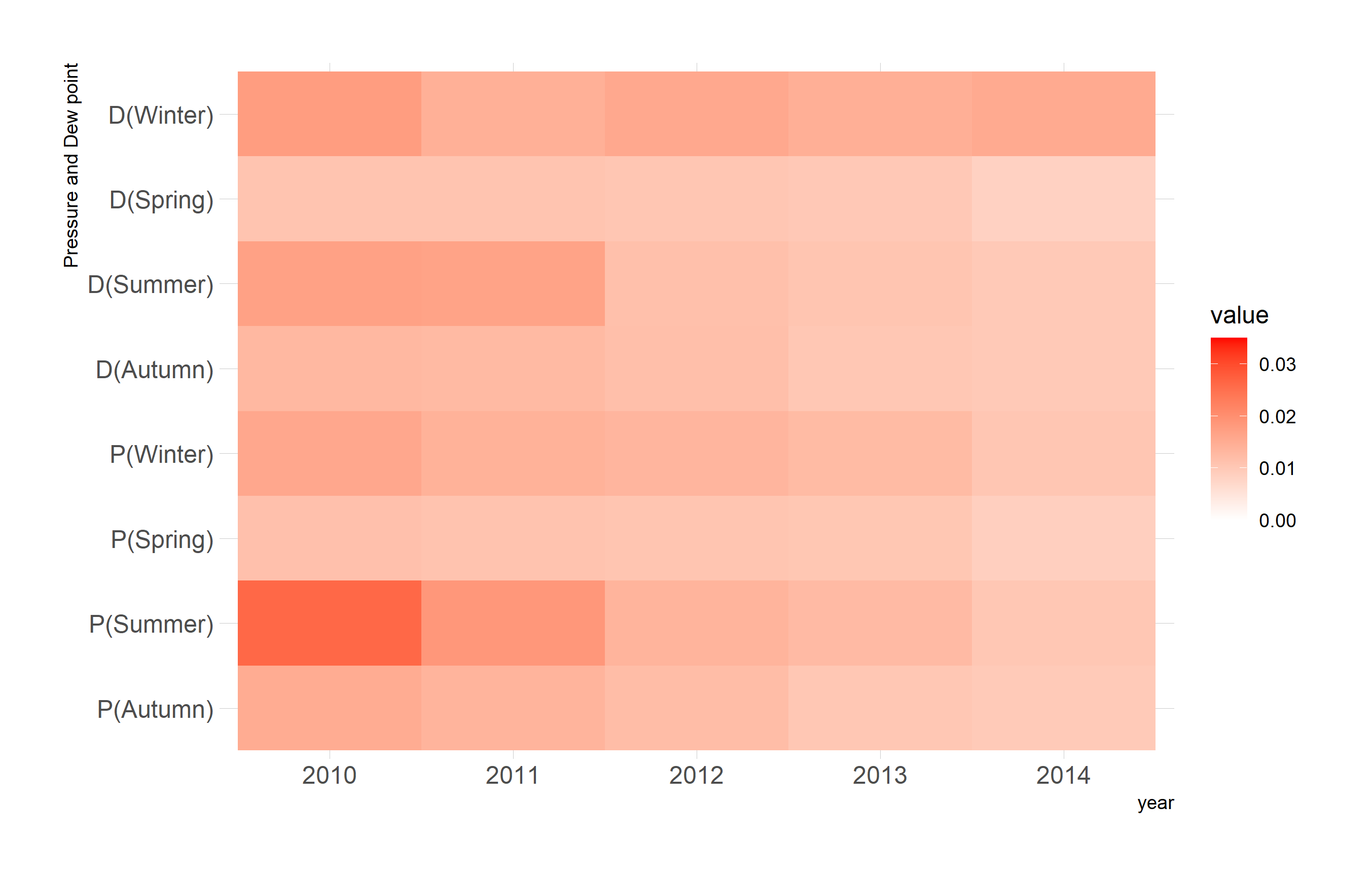}
	\includegraphics[width=0.31\textwidth]{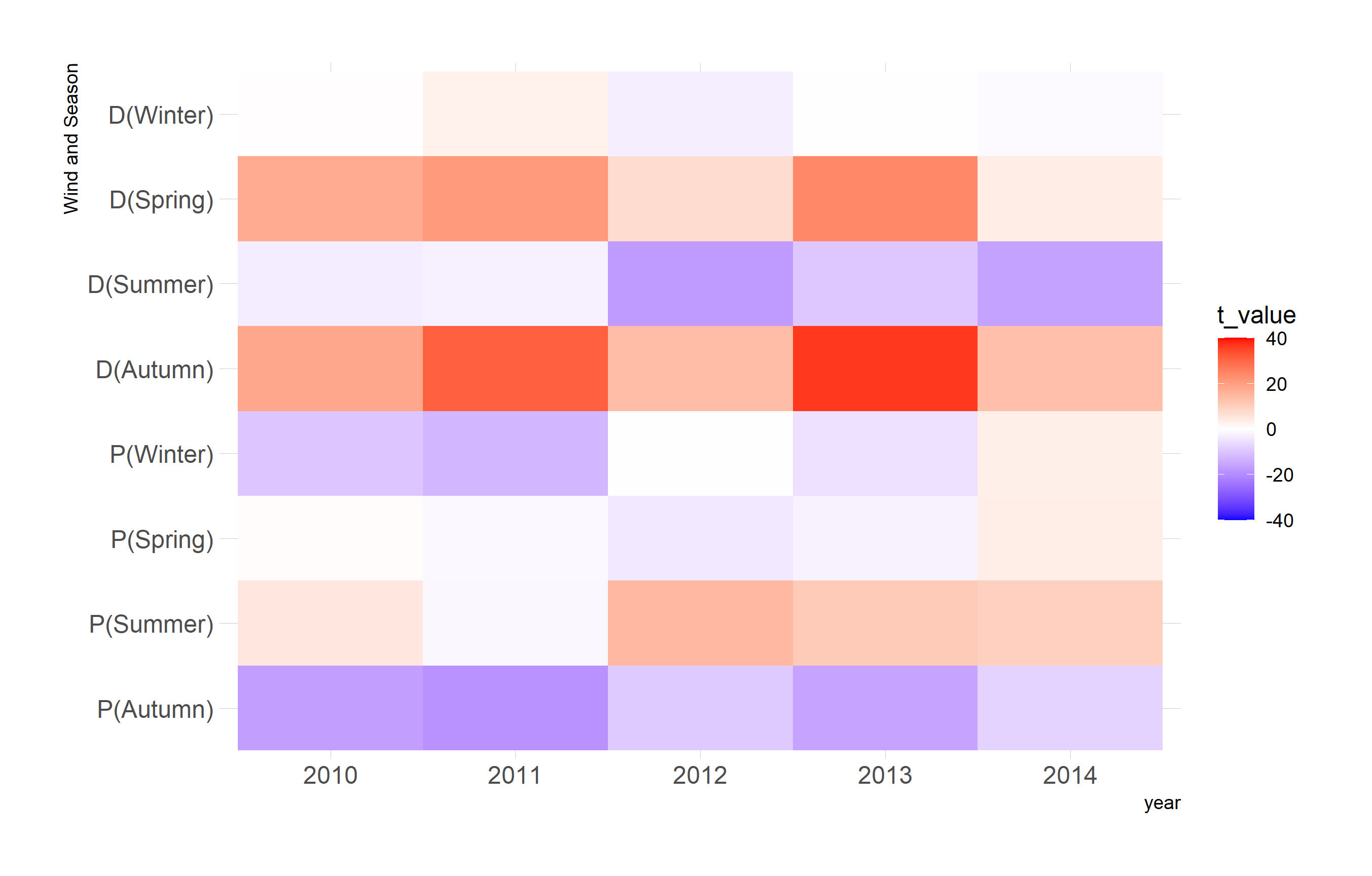}
	\caption{\label{fig:real_PD_season} The influence of pressure and  dew point in different seasons. Left panel: the heat map of the estimated coefficients at the end of a year. Middle panel: the corresponding estimated standard errors. Right panel: the $ t$-statistic.}
\end{figure}

\begin{figure}
	\centering
	\includegraphics[width=0.85\linewidth]{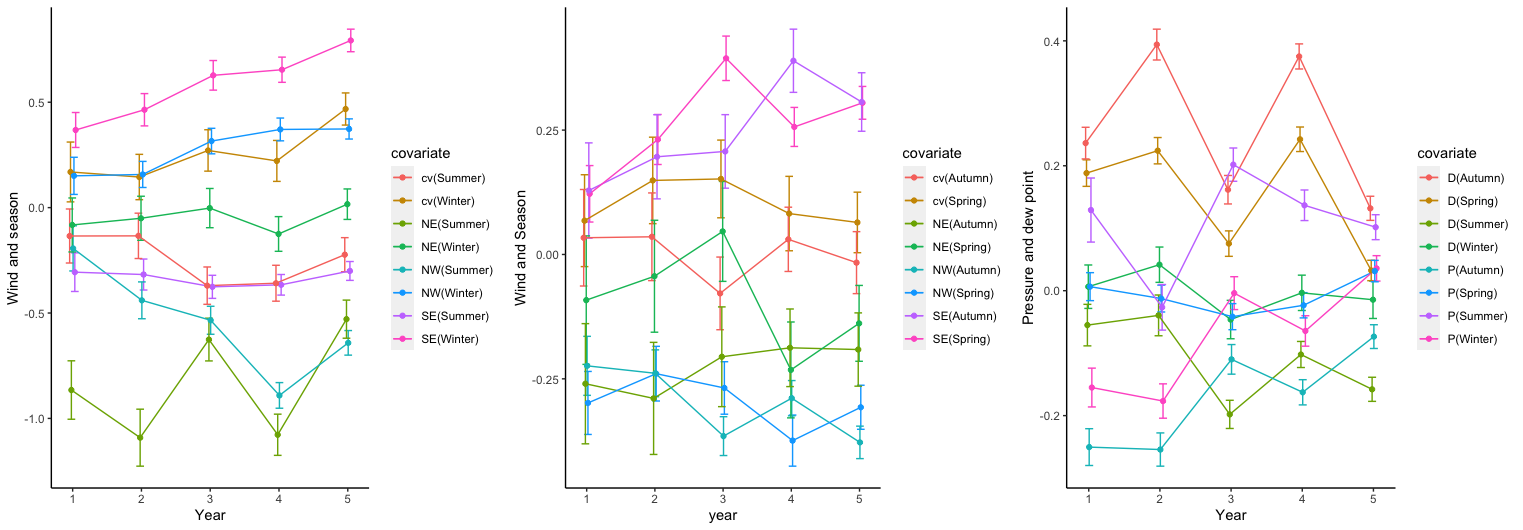}
	\caption{\label{fig:trace} The trace plots on the influence of  wind direction, pressure and dew point in different seasons. Each vertical bar corresponds to the 95\% confidence interval. Left panel: wind direction in Summer and Winter. Middle panel: wind direction in Spring and Autumn. Right panel: pressure and dew point in four seasons.}
\end{figure}

\subsection{Hang Seng Index fund data}
{\color{black} We next illustrate the application of ODL with an index fund dataset. This dataset consists of the returns of $ 1148 $ stocks listed in Hong Kong Stock Exchange and the Hang Seng Index (HSI, a freefloat-adjusted market-capitalization-weighted stock-market index in Hong Kong)  during the period from January 2010 to December 2020. The response variable is the  return of the HSI for every three days, and  the predictors are
 every-three-day returns of the $1148$ stocks.
 We partition the data into batches  according to chronological order.
 Specifically, the first batch consists of a two-year dataset from 2010-2011 to ensure sufficient sample size at the initial stage and each subsequent batch  contains one-year data. Hence, $ b = 10, n_1 = 164 $ and $ n_j = 82$ for $ j = 2,\ldots, 10. $ Similar to \cite{lan2016testing},
 the goal of this study is to identify the most relevant stocks that can be used to create a portfolio for index tracking.

The proposed ODL method is applied and  the coefficients of 19 stocks are identified to be significant at a significance level $ \alpha = 0.05. $
The estimates of the $19$ regression coefficients and their standard errors are presented in Figure \ref{fig:HSI_stock}.
Among these selected stocks, only three of them (with stocks code 0004.HK, 1088.HK and 3988.HK) are not
constituent stocks of  the current HSI (June 2021),
but they are highly associated with the constituent stocks of the HSI. For example,  0004.HK (Wharf Holdings) is the parent company of 1997.HK (Wharf Real Estate Investment Company Ltd), a constituent stock of the current HSI.
For the other 16  selected stocks, they  cover all sub-indexes of HSI, including Finance Sub-index, Utilities Sub-index, Properties Sub-index and Commerce \& Industry Sub-index. Our analysis demonstrates the importance of selecting diversified  stocks to establish a portfolio for tracking HSI.

In Figure \ref{fig:HSI_stock},  the left panel displays the estimated coefficients of the 19 stocks, among which the significance of many stocks does not change much in past years except for  0700.HK (Tencent Holding Ltd).
Tencent was listed on the Hong Kong Stock Exchange in 2004 and was added as a Hang Seng Index
Constituent Stock in 2008.  The Chinese tech giant  has become the most valuable publicly traded company in China in 2018 and thus its weight in HSI was increasing in past years, which is consistent with our analysis.
  In the right panel, as expected, the standard errors decrease as more and more data are collected. However,
  the standard errors of several stocks increase in 2019-2020.  We believe this might be related to the impact of the unprecedented COVID-19
  pandemic.  COVID-19 virus has ravaged economies all over the world and changed  consumer behavior and preferences.
     In the COVID-19 pandemic,
    there are many losers in traditional industries,  but the tech giants are thriving, as  demand for online services and digital utilities has exploded.
  The shift in market may have caused extra uncertainty in statistical analysis. In summary, our proposed ODL
  performs reasonably well  in analyzing this financial  dataset.

\begin{figure}
	\centering
	\includegraphics[width=0.48\textwidth]{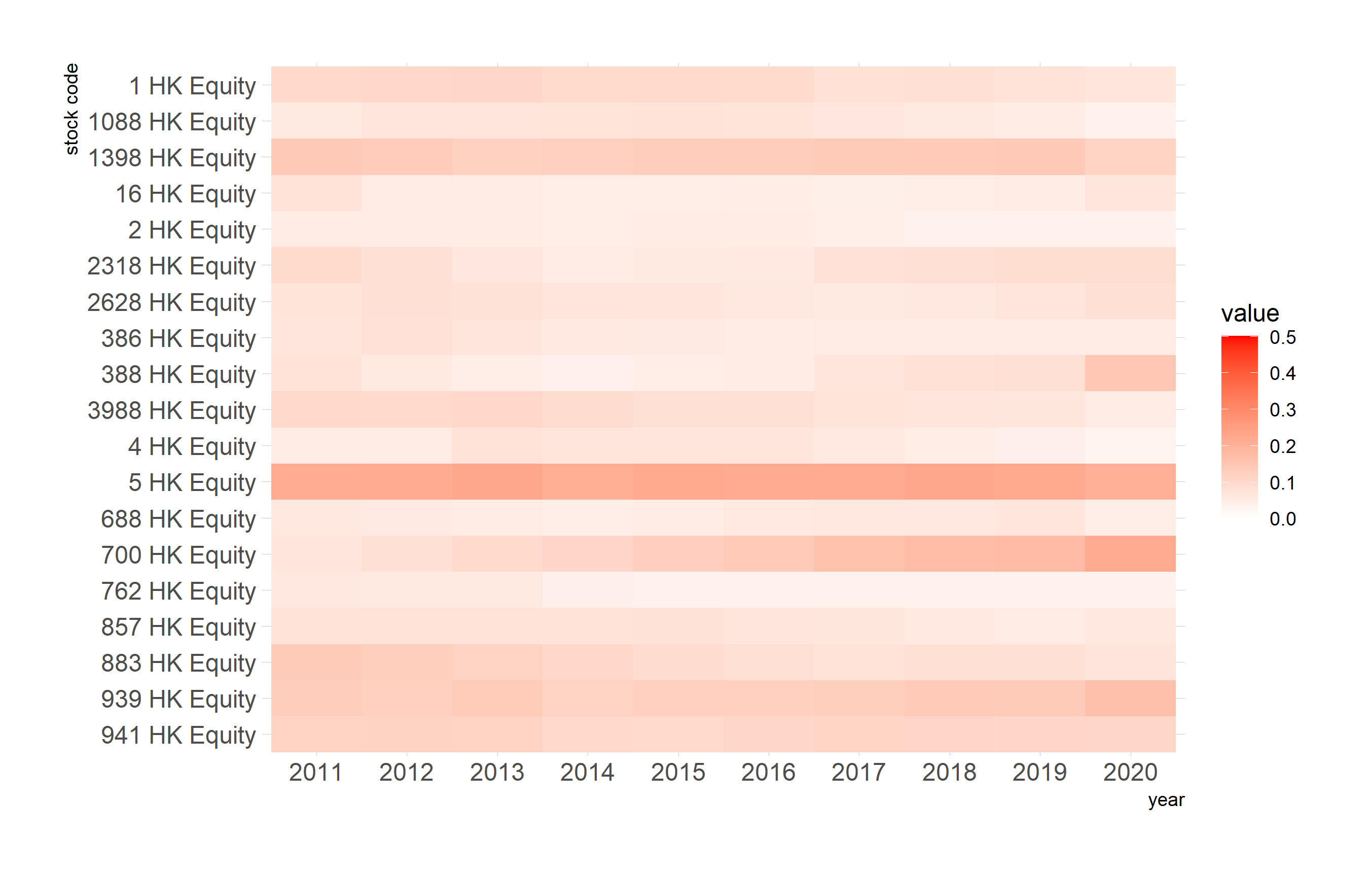}
	\includegraphics[width=0.48\textwidth]{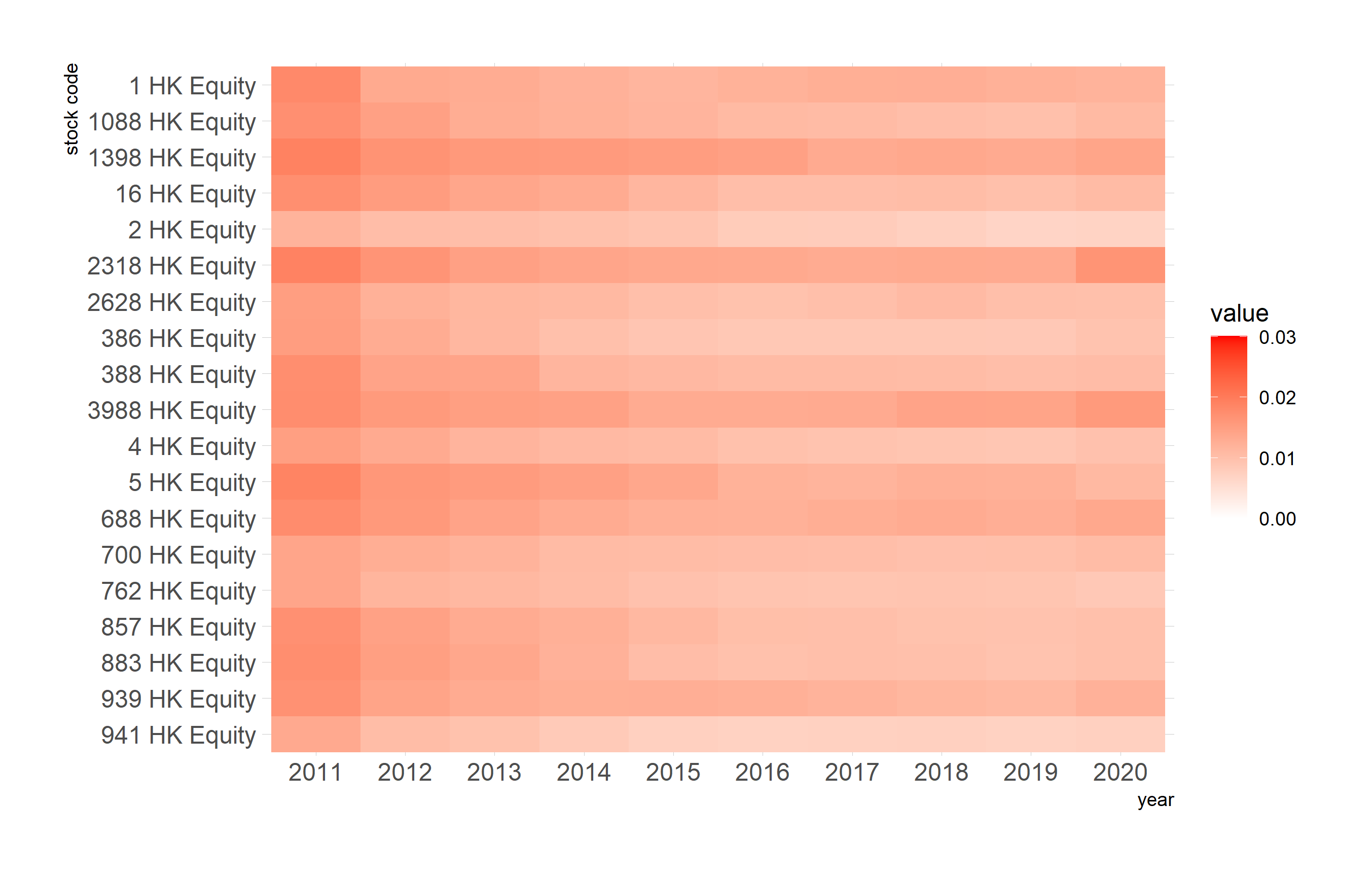}
	\caption{\label{fig:HSI_stock} Analysis results of the Hang Seng Index fund data. Left panel: the heat map of the estimated coefficients. Right panel: the corresponding estimated standard errors. Note that the results in the first column of each graph  is based on the data collected from 2010-2011.}
\end{figure}

}

\section{Discussion}\label{sec:discussion}

In this paper we developed an online debiased lasso estimator for statistical inference in  linear models with high-dimensional streaming data. The proposed method does not assume the availability of the full dataset at the initial stage and only requires the availability of the current batch of the data stream and  the sufficient statistics of the historical data. A natural dynamic tuning parameter selection procedure that takes advantage of streaming data structure is developed as an important ingredient of the proposed algorithm.
The proposed online inference procedure is justified theoretically under regularity conditions similar to those in the offline setting and mild conditions on the batch size.

There are several other interesting questions that deserve further study. First, we focused on the problem of making statistical inference about individual regression coefficients, the proposed method can be extended to the case of making inference about a fixed and low-dimensional subvector of the coefficient. Second, we did not address the problem of variable selection in the online learning setting consider here. This is apparently different from the variable selection problem in the offline setting. The main issue is how to recover the variables that are dropped at the early stages of the stream but may be important as more data come in. Third, it would be interesting to generalize the proposed method to generalized linear and nonlinear models.
These questions warrant thorough investigation in the future.

\section*{Acknowledgment}

The work of J. Huang is partially supported by the U.S. NSF grant DMS-1916199.
The work of Y. Lin is supported by the Hong Kong Research Grants Council (Grant No.
14306219 and 14306620), the National Natural Science Foundation of China (Grant No.
11961028) and Direct Grants for Research, The Chinese University of Hong Kong.


\bibliographystyle{authordate1}
\bibliography{paper-ref}

\appendix
\section{Appendix}

In the appendix, we prove Theorems  \ref{thm_online}-\ref{thm_online_3} and include an additional figure of Q-Q plots from the simulation studies in Section 4.2.
We first define the notation needed below.
For any sequences  $ \{X_N\}_{N \in \mathbb{N}} $ and  $ \{Y_N\}_{N \in \mathbb{N}} $, we say that $ X_N = O_\mathbb{P} (Y_N) $ if for any $ \epsilon > 0 $, there exists  $ M_1, M_2 > 0 $ such that $\mathbb{P}(|X_n/Y_n| \geq M_1) < \epsilon $ for any $n > M_2$. Roughly speaking, $ X_N = O_\mathbb{P} (Y_N) $ means that $ X_N/Y_N $ is stochastically bounded. Besides, $ X_N = o_\mathbb{P} (Y_N) $ means that $ X_N/Y_N $ converges to zero in probability. Particularly, $ X_N = \Omega(Y_N) $ if $ X_N = O_\mathbb{P}(Y_N) $  and $ Y_N = O_\mathbb{P}(X_N)  $. In addition, we use $ c_1, c_2, c_3 \ldots $ to stand for the constants which do not depend on $ N_b $.

Lemmas \ref{lemma_1} - \ref{lemma_3} are needed to prove Theorem 1. The first two lemmas show the consistency of the lasso estimators.

\begin{lemma}\label{lemma_1}
	Suppose that Assumption \ref{assumption_1} holds and $ n_1 \geq c_1s_r\log p$ for some constant $ c_1 $.
	Then,  for any $ j = 1, \ldots, b $,	with probability at least $  1 - p^{-3} $,  low-dimensional projection defined in \eqref{projection_step_j} with $ \lambda_j = c_2\sqrt{{\log p}/{N_j}} $ satisfies,
	\begin{equation}\label{lasso_error}
		\lVert \bm{\widehat{\gamma}}^{(j)}_{r} - \bm{{\gamma}}_{r} \lVert_1 \leq c_3 s_r \lambda_j.
	\end{equation}
\end{lemma}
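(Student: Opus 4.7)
The plan is to recognize that the online low-dimensional projection \eqref{projection_step_j} is just an offline lasso on the cumulative dataset $\{\mathcal{D}_1,\ldots,\mathcal{D}_j\}$ with sample size $N_j$, and then verify the usual dual feasibility and compatibility events at this sample size. Stack the cumulative data into $\widetilde{\bX}:=((\bX^{(1)}_{-r})^\top,\ldots,(\bX^{(j)}_{-r})^\top)^\top\in\mathbb{R}^{N_j\times(p-1)}$ and $\widetilde{\bx}_r:=((\bx^{(1)}_r)^\top,\ldots,(\bx^{(j)}_r)^\top)^\top$. By the population normal equations defining $\bm{\gamma}_r$, one has $\widetilde{\bx}_r=\widetilde{\bX}\bm{\gamma}_r+\bm{\eta}$, where $\bm{\eta}$ stacks the i.i.d.\ residuals $\bm{z}_r$ (each sub-Gaussian under (A1)--(A2)) and $\mathbb{E}[\bm{x}_{-r,i}\,z_{r,i}]=\boldsymbol{0}$. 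Let $S=\{k\neq r:\bTheta_{k,r}\neq 0\}$, so $|S|=s_r$.

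Set $\bm{\delta}:=\bm{\widehat{\gamma}}^{(j)}_r-\bm{\gamma}_r$. Optimality of $\bm{\widehat{\gamma}}^{(j)}_r$ combined with the linear model above yields the standard basic inequality
\begin{equation*}
\frac{1}{2N_j}\|\widetilde{\bX}\bm{\delta}\|_2^2+\lambda_j\|\bm{\widehat{\gamma}}^{(j)}_r\|_1\leq \Big\|\frac{\widetilde{\bX}^\top\bm{\eta}}{N_j}\Big\|_\infty\|\bm{\delta}\|_1+\lambda_j\|\bm{\gamma}_r\|_1.
\end{equation*}
On the dual feasibility event $\mathcal{E}_1(j):=\{2\|\widetilde{\bX}^\top\bm{\eta}/N_j\|_\infty\leq\lambda_j\}$, the triangle inequality and the $s_r$-sparsity of $\bm{\gamma}_r$ produce the cone condition $\|\bm{\delta}_{S^c}\|_1\leq 3\|\bm{\delta}_S\|_1$. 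Intersecting with the compatibility event $\mathcal{E}_2(j):=\{\widetilde{\bX}^\top\widetilde{\bX}/N_j\text{ has compatibility constant }\phi^2\geq \Lambda_{\min}^2/2\text{ on }S\}$, the textbook lasso argument (see, e.g., Chapter~6 of \citet{buhlmann2011statistics}) then delivers $\|\bm{\delta}\|_1\leq c_3 s_r\lambda_j/\phi^2$, which is the claimed bound.

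It remains to show $\mathbb{P}(\mathcal{E}_1(j)\cap\mathcal{E}_2(j))\geq 1-p^{-3}$. For $\mathcal{E}_1(j)$, each coordinate of $\widetilde{\bX}^\top\bm{\eta}/N_j$ averages $N_j$ centered sub-exponential variables (products of two sub-Gaussians), so Bernstein's inequality together with a union bound over the $p-1$ coordinates gives $\mathbb{P}(\mathcal{E}_1(j)^c)\leq 2p\exp(-c' N_j\lambda_j^2)\leq p^{-3}/2$ provided $c_2$ in $\lambda_j=c_2\sqrt{\log p/N_j}$ is chosen sufficiently large. For $\mathcal{E}_2(j)$, I would appeal to the standard transfer of compatibility from $\bm{\Sigma}_{-r,-r}$ to its sub-Gaussian empirical Gram matrix: under (A1)--(A2), when $N_j\geq c''s_r\log p$, one has $\mathbb{P}(\mathcal{E}_2(j)^c)\leq 2\exp(-c''' N_j)\leq p^{-3}/2$. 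The initial-batch condition $n_1\geq c_1 s_r\log p$ guarantees $N_j\geq n_1$ meets both sample-size thresholds for every $j=1,\ldots,b$.

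The main technical obstacle is the compatibility event at $j=1$: this is the reason the initial-batch size must scale as $s_r\log p$, since a smaller $n_1$ admits a degenerate cone geometry of $\widetilde{\bX}^\top\widetilde{\bX}/N_1$ that destroys the $s_r\lambda_j$ rate. All other ingredients reduce to routine concentration for sub-exponential sums and an application of the offline lasso $\ell_1$ oracle bound.
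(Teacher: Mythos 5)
Your proposal is correct and follows essentially the same route as the paper's own proof: the basic lasso inequality, a Bernstein-plus-union-bound control of the dual feasibility event for the sub-exponential cross terms, the cone condition, and the transfer of the compatibility condition from $\bm\Sigma_{-r}$ to the empirical Gram matrix under $N_j\geq n_1\geq c_1 s_r\log p$ (the paper invokes an extension of Corollary~1 of \citet{raskutti2010restricted} to the sub-Gaussian case for exactly this step). The only cosmetic difference is your explicit bookkeeping of the two failure probabilities as $p^{-3}/2$ each, versus the paper's $p^{-4}$ per event.
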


\begin{proof}[Proof of Lemma 1]
	For notational convenience, we suppress the subcript $ r $ of $ \bm \gamma $ in this proof.
	For  any fixed $ j = 1, \ldots, b,$
	\begin{equation}\label{gamma_j}
		\bm{\widehat{\gamma}}^{(j)} := \underset{\bm{\gamma}\in\mathbb{R}^{(p-1)}}{\arg\min} \left\{\frac{1}{2N_j}\sum_{i=1}^{j}\|\bm{x}^{(i)}_r - \bm{X}^{(i)}_{-r}\bm{\gamma} \|_2^2 + \lambda_j\|\bm{\gamma} \|_1
		\right\},
	\end{equation}
	where $ N_j $ is the cumulative size of data at step $ j $. 
	Note that $ S_r = \{k: \bm{\Theta}_{k,r} \neq 0, k\neq r \} $ and $ s_r = |S_r|. $
	To establish the consistency of the lasso estimator, we first show that $ \widetilde{\bm \Sigma}^{(j)}_{-r} :{=} \sum_{i=1}^{j} (\bm{X}^{(i)}_{-r})^\top \bm{X}^{(i)}_{-r}/N_{j} $ satisfies the compatibility condition for the set $ S_r $. Namely, there is a constant $ \phi_j $ such that for all $ \bm{\gamma} $ satisfying $ \| \bm{\gamma}_{S_r^c}  \|_1 \leq 3 \| \bm{\gamma}_{S_r}  \|_1,  $ it holds that
	\begin{equation}\label{compatiblity_condition_gamma}
		\| \bm{\gamma}_{S_r}\|_1^2 \leq  (\bm{\gamma}^\top \widetilde{\bm \Sigma}^{(j)}_{-r}  \bm{\gamma})s_r/\phi_j^2,
	\end{equation}
	where the $ i$-th element of $ \bm{\gamma}_{S_r} $ is denoted by $ \bm{\gamma}_{S_r,i} = \bm{\gamma}_i 1_{\{i \in S_r\}}$ for $ i = 1, \ldots, {p-1}. $
	It follows from an extension of Corollary 1 in \cite{raskutti2010restricted} from  Gaussian case to sub-Gaussian case that, with probability at least $ 1-p^4 $, the above inequality holds as long as $ N_j \geq c_1s_r\log p $ and $ {\bm \Sigma}^{(j)}_{-r} $ meets the compatibility condition,
	which hold under the assumption  that  $ n_1 \geq c_1s_r\log p $ and \textit{(A2)} in Assumption \ref{assumption_1} respectively.

	The remaining proof follows  standard arguments. More notations are introduced. Recall that
	\begin{equation*}
		\bm \gamma := \underset{\bm{\gamma}\in\mathbb{R}^{(p-1)}}{\arg\min} \ \mathbb{E}\left\{\frac{1}{2N_j}\sum_{i=1}^{j}\|\bm{x}^{(i)}_r - \bm{X}^{(i)}_{-r}\bm{\gamma} \|_2^2
		\right\}
	\end{equation*}
	and let $ \bm{v}^{(j)} =  \bm{\widehat{\gamma}}^{(j)} - \bm{{\gamma}}.$ Since $ \bm{\widehat{\gamma}}^{(j)} $ is the lasso estimator defined in \eqref{gamma_j}, it follows that
	\begin{eqnarray*}
		\frac{1}{2N_j}\sum_{i=1}^{j}\|\bm{x}^{(i)}_r - \bm{X}^{(i)}_{-r}\bm{\widehat{\gamma}}^{(j)}  \|_2^2 + \lambda_j\|\bm{\widehat{\gamma}}^{(j)}  \|_1 \leq \frac{1}{2N_j}\sum_{i=1}^{j}\|\bm{x}^{(i)}_r - \bm{X}^{(i)}_{-r}\bm{{\gamma}} \|_2^2 + \lambda_j\|\bm{{\gamma}} \|_1.
	\end{eqnarray*}
	Then,
	\begin{eqnarray*}
		(\bm{v}^{(j)})^\top \widetilde{\bm \Sigma}^{(j)}_{-r} \bm{v}^{(j)} &\leq & \frac{2}{N_j}\sum_{i=1}^{j}(\bm{x}^{(i)}_r - \bm{X}^{(i)}_{-r}\bm{{\gamma}})^\top\bm{X}^{(i)}_{-r} \bm{v}^{(j)} + 2\lambda_j(\|\bm{{\gamma}} \|_1 - \|\bm{\widehat{\gamma}}^{(j)}  \|_1)\\
		&\leq& \frac{2}{N_j} \max_{k \neq r} \left\lvert \sum_{i=1}^{j}(\bm{x}^{(i)}_r - \bm{X}^{(i)}_{-r}\bm{{\gamma}})^\top\bm{x}^{(i)}_{k} \right\lvert   \| \bm{v}^{(j)}\|_1  + 2\lambda_j(\|\bm{{\gamma}} \|_1 - \|\bm{\widehat{\gamma}}^{(j)}  \|_1).
	\end{eqnarray*}
	Note that
	\begin{eqnarray}\label{sum_iid}
		\sum_{i=1}^{j}(\bm{x}^{(i)}_r - \bm{X}^{(i)}_{-r}\bm{{\gamma}})^\top\bm{x}^{(i)}_k = \sum_{i=1}^{j} \sum_{l=1}^{n_i}({x}^{(i)}_{r,l} - \bm{X}^{(i)}_{-r,l}\bm{{\gamma}})^\top{x}^{(i)}_{k,l},
	\end{eqnarray}
	where $ {x}^{(i)}_{r,l}  $ and $ \bm{X}^{(i)}_{-r,l}  $ are the explanatory variables from the $ l$-th observation in $ i$-th data batch.  Then, \eqref{sum_iid} is written as the sum of i.i.d random variables. Specifically, $ \mathbb{E}\{({x}^{(i)}_{r,l} - \bm{X}^{(i)}_{-r,l}\bm{{\gamma}})^\top{x}^{(i)}_{k,l}\} = 0 $ and $ ({x}^{(i)}_{r,l} - \bm{X}^{(i)}_{-r,l}\bm{{\gamma}})^\top{x}^{(i)}_{k,l} $ is sub-exponential distributed by the definition of $ \bm \gamma$ and \textit{(A1)} in Assumption \ref{assumption_1} respectively.
	By {Bernstein  inequality}, we obtain  that
	\begin{eqnarray*}
		\mathbb{P}\left(\left|\sum_{i=1}^{j} \sum_{l=1}^{n_i}({x}^{(i)}_{r,l} - \bm{X}^{(i)}_{-r,l}\bm{{\gamma}})^\top{x}^{(i)}_{k,l}\right| \geq 2c_2\sqrt{{N_j}{\log p}}\right) \leq p^{-5}
	\end{eqnarray*}
	for some constant $ c $ which does not depend on $ p $ and $ N_i$. Since the above inequality holds for any $ k\neq r $, by Bonferroni inequality, it holds that
	\begin{eqnarray*}
		\mathbb{P}\left(\max_{k \neq r}\left|\sum_{i=1}^{j} \sum_{l=1}^{n_i}({x}^{(i)}_{r,l} - \bm{X}^{(i)}_{-r,l}\bm{{\gamma}})^\top{x}^{(i)}_{k,l}\right| < 2c_2\sqrt{{N_j}{\log p}}\right)> 1-p^{-4}.
	\end{eqnarray*}
	By choosing $ \lambda_j = c_2 \sqrt{{\log p}/{N_j}}$, then,  with probability at least $ 1 - p^{-4}, $
	\begin{eqnarray}
		(\bm{v}^{(j)})^\top \widetilde{\bm \Sigma}^{(j)}_{-r} \bm{v}^{(j)}
		&\leq&  \lambda_j  \| \bm{v}^{(j)}\|_1  + 2\lambda_j(\|\bm{{\gamma}} \|_1 - \|\bm{\widehat{\gamma}}^{(j)}  \|_1) \nonumber \\
		&=& \lambda_j  \| \bm{v}^{(j)}\|_1  + 2\lambda_j(\|\bm{{\gamma}}_{S_r^c} \|_1 - \|\bm{\widehat{\gamma}}^{(j)}_{S_r}\|_1 - \|\bm{\widehat{\gamma}}^{(j)}_{S_r^c} \|_1) \label{sparsity_gamma}\\
		&\leq& \lambda_j \| \bm{v}^{(j)}\|_1  + 2\lambda_j(\|\bm{v}^{(j)}_{S_r}\|_1 - \|\bm{v}^{(j)}_{S_r^c} \|_1)\nonumber\\
		&=&\lambda_j(3\|\bm{v}^{(j)}_{S_r}\|_1 - \|\bm{v}^{(j)}_{S_r^c} \|_1)\nonumber,
	\end{eqnarray}
	where \eqref{sparsity_gamma} holds due to $ \|\bm{{\gamma}}_{S_r} \|_1 = 0.$ It further implies that $ \|\bm{v}^{(j)}_{S_r^c}\|_1 \leq 3\|\bm{v}^{(j)}_{S_r} \|_1. $ Together with the compatibility condition, we have
	\begin{eqnarray*}
		\frac{\| \bm{v}_{S_r}^{(j)} \|_1^2 \phi_j^2}{s_r} \leq (\bm{v}^{(j)})^\top \widetilde{\bm \Sigma}^{(j)}_{-r} \bm{v}^{(j)} \leq 3\lambda_j\|\bm{v}^{(j)}_{S_r} \|_1.
	\end{eqnarray*}
	Consequently,
	\begin{eqnarray}\label{lasso_error_j}
		\| \bm{v}^{(j)} \|_1 \leq 4\| \bm{v}_{S_r}^{(j)} \|_1 \leq 12s_r\lambda_j/\phi_j^2.
	\end{eqnarray}
	Since \eqref{lasso_error_j} holds for any $ j = 1,\ldots, b$, the proof of Lemma \ref{lemma_1}  is complete.
\end{proof}
\begin{lemma}\label{lemma_1_2}
	Suppose that Assumption \ref{assumption_1} hold and  $ N_b \geq c_1s_0\log p$ for some constant $ c_1 $.
	Then, with probability at least $  1 - p^{-4} $, the lasso estimator in \eqref{lasso_step_j} with $ \lambda_b =
	c_2\sqrt{{\log p}/{N_b}} $ satisfies,
	\begin{equation*}
		\lVert {\widehat{\bbeta}}^{(b)} - {{\bbeta}}_{0} \lVert_1 \leq c_3 s_0 \lambda_b.
	\end{equation*}
\end{lemma}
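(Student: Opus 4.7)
The proof will closely parallel the argument for Lemma \ref{lemma_1}, with the role of the covariate $\bm{x}_r^{(i)}$ replaced by the response $\bm{y}^{(i)}$ and the role of the population regression $\bm{\gamma}$ replaced by $\bm{\beta}_0$. The plan has four main steps.

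First, I would establish a compatibility condition on the pooled sample Gram matrix $\widetilde{\bm{\Sigma}}^{(b)} := (1/N_b)\sum_{j=1}^b (\bm{X}^{(j)})^\top \bm{X}^{(j)}$ for the support $S_0 = \{k: \beta_{0,k} \neq 0\}$. Since the rows of $\bm{X}$ are i.i.d.\ sub-Gaussian by (A1) and $\bm{\Sigma}$ satisfies (A2), the same extension of Raskutti et al.\ (2010) used in Lemma \ref{lemma_1} applies: as soon as $N_b \geq c_1 s_0 \log p$, there is a constant $\phi_b > 0$ such that $\|\bm{v}_{S_0}\|_1^2 \leq s_0 (\bm{v}^\top \widetilde{\bm{\Sigma}}^{(b)} \bm{v})/\phi_b^2$ for every $\bm{v}$ with $\|\bm{v}_{S_0^c}\|_1 \leq 3\|\bm{v}_{S_0}\|_1$, with probability at least $1 - p^{-4}$.

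Second, I would write the basic inequality. Setting $\bm{v}^{(b)} = \widehat{\bm{\beta}}^{(b)} - \bm{\beta}_0$ and exploiting the model $\bm{y}^{(j)} = \bm{X}^{(j)}\bm{\beta}_0 + \bm{\epsilon}^{(j)}$, the optimality of $\widehat{\bm{\beta}}^{(b)}$ in \eqref{lasso_step_j} yields
\begin{equation*}
(\bm{v}^{(b)})^\top \widetilde{\bm{\Sigma}}^{(b)} \bm{v}^{(b)} \leq \frac{2}{N_b}\sum_{j=1}^{b} (\bm{\epsilon}^{(j)})^\top \bm{X}^{(j)} \bm{v}^{(b)} + 2\lambda_b\bigl(\|\bm{\beta}_0\|_1 - \|\widehat{\bm{\beta}}^{(b)}\|_1\bigr).
\end{equation*}
Using Hölder's inequality, the first term on the right is bounded by $(2/N_b)\max_{k}|\sum_{j,l} \epsilon^{(j)}_l x^{(j)}_{k,l}| \cdot \|\bm{v}^{(b)}\|_1$.

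Third, I would bound the max-noise term by Bernstein's inequality. Each summand $\epsilon^{(j)}_l x^{(j)}_{k,l}$ is a product of a sub-exponential random variable (by (A3)) and a sub-Gaussian one (by (A1)); it is therefore sub-exponential with parameters of order $O(1)$ under (A2). The sum over $j = 1,\ldots,b$ and $l = 1,\ldots,n_j$ consists of $N_b$ i.i.d.\ mean-zero sub-exponential terms, so Bernstein's inequality combined with a union bound over $k = 1,\ldots,p$ gives, with probability at least $1-p^{-4}$,
\begin{equation*}
\max_{k}\Bigl|\sum_{j=1}^{b}\sum_{l=1}^{n_j}\epsilon^{(j)}_l x^{(j)}_{k,l}\Bigr| \leq c_2'\sqrt{N_b \log p}.
\end{equation*}
Choosing $\lambda_b = c_2 \sqrt{\log p / N_b}$ with $c_2$ large enough then makes the noise contribution at most $\lambda_b \|\bm{v}^{(b)}\|_1$.

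Finally, I would conclude exactly as in Lemma \ref{lemma_1}. Splitting $\|\widehat{\bm{\beta}}^{(b)}\|_1 = \|\widehat{\bm{\beta}}^{(b)}_{S_0}\|_1 + \|\widehat{\bm{\beta}}^{(b)}_{S_0^c}\|_1$ and using $\bm{\beta}_{0,S_0^c} = 0$, I obtain $\|\bm{v}^{(b)}_{S_0^c}\|_1 \leq 3\|\bm{v}^{(b)}_{S_0}\|_1$, which activates the compatibility condition. This yields $\phi_b^2 \|\bm{v}^{(b)}_{S_0}\|_1^2 / s_0 \leq 3\lambda_b \|\bm{v}^{(b)}_{S_0}\|_1$, and therefore $\|\bm{v}^{(b)}\|_1 \leq 4\|\bm{v}^{(b)}_{S_0}\|_1 \leq 12 s_0 \lambda_b / \phi_b^2$. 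Taking $c_3 = 12/\phi_b^2$ completes the proof, and the union of the two $1 - p^{-4}$ events yields the claimed $1 - p^{-4}$ bound (up to adjusting constants). The main technical care is simply verifying the sub-exponential tail for $\epsilon^{(j)}_l x^{(j)}_{k,l}$ and ensuring the compatibility result of Raskutti et al.\ applies verbatim under sub-Gaussian rows; neither presents a genuine obstacle since both are standard extensions used already in Lemma \ref{lemma_1}.
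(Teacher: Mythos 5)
Your proposal is correct and follows essentially the same route as the paper: the paper's own proof of this lemma is simply the remark that it is structurally identical to Lemma \ref{lemma_1} with $j=b$, using the compatibility condition, the basic inequality, and a Bernstein bound on $\max_k|\sum \epsilon^{(j)}_l x^{(j)}_{k,l}|$ via (A3), which is exactly what you spell out. The only point to flag is that the product of a sub-Gaussian covariate and a sub-exponential error is strictly heavier-tailed than sub-exponential, so the Bernstein step needs a slightly generalized concentration inequality; but the paper asserts the same step at the same level of detail, so this is not a deviation from its argument.
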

The proof of Lemma \ref{lemma_1_2} is structurally similar to the proof of Lemma \ref{lemma_1} by letting  $ j = b $. {\color{black}$ (A3) $ in Assumption \ref{assumption_1} is used to obtain the concentration inequality as Bernstein inequality in Lemma \ref{lemma_1}.} We omit the details here.
The next lemma is used to estimate the cumulative terms in the online learning.

\begin{lemma}\label{lemma_1_3}
	Recall that $ n_j $ and $ N_j $ are the batch size and the cumulative batch size respectively when the $ j$-th data arrives,  $ j = 1,\ldots, b. $ Then,
	\begin{eqnarray}
		\sum_{j=1}^{b}\frac{n_j}{N_j} &\leq& 1 + \log \frac{N_b}{n_1}, \label{cumulative_1}\\
		\sum_{j=1}^b \frac{n_j}{\sqrt{N_j}} &\leq& 2\sqrt{N_b}. \label{cumulative_2}
	\end{eqnarray}
\end{lemma}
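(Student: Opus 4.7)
\textbf{Proof proposal for Lemma \ref{lemma_1_3}.} Both bounds follow from the same telescoping idea: compare the discrete sum on the left-hand side to a Riemann integral of a monotone function on $[N_{j-1}, N_j]$, where $N_0 := 0$ and $N_j = N_{j-1} + n_j$. The key observation is that $n_j = N_j - N_{j-1}$, so each summand is exactly the length of an interval multiplied by the value of a monotone function at its right endpoint.

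For the first inequality, I would split off the $j=1$ term, which contributes $n_1/N_1 = 1$. For $j \geq 2$, since $1/x$ is decreasing, I would use
\begin{equation*}
\frac{n_j}{N_j} \;=\; \frac{N_j - N_{j-1}}{N_j} \;\leq\; \int_{N_{j-1}}^{N_j} \frac{dx}{x} \;=\; \log\frac{N_j}{N_{j-1}}.
\end{equation*}
Summing from $j=2$ to $b$ telescopes to $\log(N_b/N_1) = \log(N_b/n_1)$, which together with the isolated $j=1$ term yields $1 + \log(N_b/n_1)$.

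For the second inequality, I would apply the same integral comparison with the decreasing function $1/\sqrt{x}$, but without needing to separate the $j=1$ term since $1/\sqrt{x}$ is integrable at $0$. Specifically,
\begin{equation*}
\frac{n_j}{\sqrt{N_j}} \;=\; \frac{N_j - N_{j-1}}{\sqrt{N_j}} \;\leq\; \int_{N_{j-1}}^{N_j} \frac{dx}{\sqrt{x}} \;=\; 2\bigl(\sqrt{N_j} - \sqrt{N_{j-1}}\bigr).
\end{equation*}
Summing from $j=1$ to $b$ telescopes to $2(\sqrt{N_b} - \sqrt{N_0}) = 2\sqrt{N_b}$.

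Neither step presents a real obstacle; both are essentially one-line integral comparisons and the only thing to verify is that the monotonicity of $1/x$ and $1/\sqrt{x}$ is applied in the correct direction. The mild asymmetry between the two bounds (the extra ``$1$'' in the first and the absence of any additive constant in the second) is entirely due to the non-integrability of $1/x$ at $0$, which forces the separate treatment of the $j=1$ term in the first bound but not in the second.
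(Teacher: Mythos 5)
Your proof is correct and is essentially the same as the paper's: the per-step bounds $\frac{N_j-N_{j-1}}{N_j}\le\log\frac{N_j}{N_{j-1}}$ and $\frac{N_j-N_{j-1}}{\sqrt{N_j}}\le 2\bigl(\sqrt{N_j}-\sqrt{N_{j-1}}\bigr)$ that you obtain by integral comparison are exactly the inequalities the paper establishes (via the auxiliary function $f(t)=\log(1+t)-t/(1+t)$ and the algebraic identity $2(\sqrt{a+b}-\sqrt{a})\ge b/\sqrt{a+b}$), after which both arguments telescope identically. Your observation about the non-integrability of $1/x$ at $0$ explaining the extra additive $1$ is a nice touch but does not change the substance.
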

\begin{proof}[Proof of Lemma \ref{lemma_1_3}]
	We first prove \eqref{cumulative_1}. Let
	$$ f(t) = \log(1 + t)  - \frac{t}{1+t}, \ t > 0.$$
	Since $ f(0) = 0 $ and $ f'(t) > 0  $ for $ t> 0$, we have $ f(t) \geq 0.$ Choosing $ t = n_b/N_{b-1} $ yields
	$ \log\left( {N_b}/{N_{b-1}} \right) \geq  {n_b}/{N_b},$
	namely, $  \log\left( {N_b} \right) \geq \log\left( {N_{b-1}} \right) + n_b/N_b. $
	Repeat the above procedure by letting $ t = n_j/N_{j-1} $ for $ j = b-1, \ldots, 2. $ It then follows that
	\begin{eqnarray*}
		\log\left( {N_b} \right) &\geq& \log\left( {N_{b-1}} \right) + \frac{n_b}{N_b}\\
		&\geq & \log\left( {N_{b-2}} \right) + \frac{n_{b-1}}{N_{b-1}} + \frac{n_b}{N_b}\\
		\cdots &\geq & \log(N_1) + \sum_{j=2}^{b} \frac{n_j}{N_j}.
	\end{eqnarray*}
	Then, in view of  $ n_1 = N_1 $, \eqref{cumulative_1} holds. The remaining step is to prove \eqref{cumulative_2}. We claim that
	$$ 2(\sqrt{a+b} - \sqrt{a}) \geq  \frac{b}{\sqrt{a+b}},\ \  \text{for}\ a, b > 0.$$
	Let $ a = N_{b-1}$ and $b = n_b $. We have $ 2\sqrt{N_b} \geq 2\sqrt{N_{b-1}} + n_b/\sqrt{N_b}. $ Similarly, we repeat this procedure by choosing $ a = N_{j-1}, b = n_j $ for $ j = b-1, \ldots, 2. $ It follows that
	\begin{eqnarray*}
		2\sqrt{N_b} &\geq& 2\sqrt{N_{b-1}} + \frac{n_b}{\sqrt{N_b}} \\
		&\geq& 2\sqrt{N_{b-2}} + \frac{n_{b-1}}{\sqrt{N_{b-1}}} + \frac{n_b}{\sqrt{N_b}}\\
		\cdots &\geq & 2\sqrt{N_1} + \sum_{j=2}^{b} \frac{n_j}{\sqrt{N_j}}.
	\end{eqnarray*}
	Given $ n_1 = N_1 $, \eqref{cumulative_2} holds.
\end{proof}

For any matrix $\bm{A}=(a_{ij})$, let $\|\bm{A}\|_{\infty}$ be the largest absolute value of its elements, that is, $\|\bm{A}\|_{\infty}=\max_{i, j}|a_{ij}|$.
The next two lemmas give the bound for the error term $ \Delta_j $ in Theorem \ref{thm_online}.

\begin{lemma}\label{lemma_2}
	Suppose that the conditions in Lemma \ref{lemma_1} holds and  the subsequent batch size $ n_j \geq c\log p, j=2,\ldots, b $, for some constants $ c $. If
	\begin{equation}\label{condition_lemma_2}
		s_r^2\frac{\log p}{N_b}\log{\frac{N_b}{n_1}} = o(1),
	\end{equation}
	then,  $ \lVert \bm{\widehat{z}}_{r} \lVert_2 = \Omega(\sqrt{N_b}). $
\end{lemma}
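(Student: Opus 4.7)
The plan is to decompose the stacked online projection into a ``population part'' $\bm{z}^{(j)}_r:=\bm{x}^{(j)}_r-\bm{X}^{(j)}_{-r}\bm{\gamma}_r$ and an ``estimation error part'' $\bm{X}^{(j)}_{-r}(\bm{\widehat{\gamma}}^{(j)}_r-\bm{\gamma}_r)$, and to show that across the $b$ batches the population part dominates. Writing
\[
\|\bm{\widehat{z}}_r\|_2^2=\sum_{j=1}^{b}\|\bm{\widehat{z}}^{(j)}_r\|_2^2,\quad \bm{\widehat{z}}^{(j)}_r=\bm{z}^{(j)}_r-\bm{X}^{(j)}_{-r}(\bm{\widehat{\gamma}}^{(j)}_r-\bm{\gamma}_r),
\]
I would apply the elementary inequality $(a-b)^2\ge a^2/2-2b^2$ termwise to obtain
\[
\|\bm{\widehat{z}}_r\|_2^2\ge \tfrac{1}{2}\sum_{j=1}^{b}\|\bm{z}^{(j)}_r\|_2^2-2\sum_{j=1}^{b}\|\bm{X}^{(j)}_{-r}(\bm{\widehat{\gamma}}^{(j)}_r-\bm{\gamma}_r)\|_2^2.
\]

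\textbf{Lower bound on the leading term.} By the definition of $\bm{\gamma}_r$, the components of $\bm{z}_r$ are i.i.d.\ with mean zero and $\mathbb{E}[z_{r,1}^2]=1/\Theta_{r,r}\ge \Lambda_{\min}^2$; moreover $z_{r,1}$ is sub-Gaussian under (A1)--(A2), so $z_{r,1}^2$ is sub-exponential. A Bernstein-type concentration inequality applied within each batch of size $n_j\ge c\log p$ yields $\|\bm{z}^{(j)}_r\|_2^2\ge c_4 n_j$ simultaneously for all $j=1,\dots,b$ with probability at least $1-bp^{-5}$, hence $\sum_j\|\bm{z}^{(j)}_r\|_2^2\ge c_4 N_b$.

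\textbf{Upper bound on the error term.} Using the $\ell_1$-$\ell_\infty$ duality,
\[
\|\bm{X}^{(j)}_{-r}(\bm{\widehat{\gamma}}^{(j)}_r-\bm{\gamma}_r)\|_2^2\le \bigl\|(\bm{X}^{(j)}_{-r})^\top\bm{X}^{(j)}_{-r}\bigr\|_\infty\cdot\|\bm{\widehat{\gamma}}^{(j)}_r-\bm{\gamma}_r\|_1^2.
\]
Standard sub-Gaussian entrywise concentration gives $\|(\bm{X}^{(j)}_{-r})^\top\bm{X}^{(j)}_{-r}\|_\infty\le c_5 n_j$ with probability at least $1-p^{-4}$, valid because $n_j\ge c\log p$. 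Lemma~\ref{lemma_1} provides $\|\bm{\widehat{\gamma}}^{(j)}_r-\bm{\gamma}_r\|_1\le c_3 s_r\lambda_j=c_6 s_r\sqrt{\log p/N_j}$ uniformly in $j$. Combining,
\[
\sum_{j=1}^{b}\|\bm{X}^{(j)}_{-r}(\bm{\widehat{\gamma}}^{(j)}_r-\bm{\gamma}_r)\|_2^2 \le c_7 s_r^2\log p\sum_{j=1}^{b}\frac{n_j}{N_j}\le c_7 s_r^2\log p\Bigl(1+\log\tfrac{N_b}{n_1}\Bigr),
\]
where the last step invokes \eqref{cumulative_1} in Lemma~\ref{lemma_1_3}.

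\textbf{Combining.} Putting the two bounds together,
\[
\|\bm{\widehat{z}}_r\|_2^2\ge \tfrac{c_4}{2}N_b-2c_7 s_r^2\log p\Bigl(1+\log\tfrac{N_b}{n_1}\Bigr)=N_b\Bigl(\tfrac{c_4}{2}-o(1)\Bigr)
\]
by assumption \eqref{condition_lemma_2}, which delivers $\|\bm{\widehat{z}}_r\|_2=\Omega(\sqrt{N_b})$. The main obstacle is the uniformity over batches: I must ensure that the population-level and estimation-level high probability statements (concentration of $\|\bm{z}^{(j)}_r\|_2^2$, concentration of $\|(\bm{X}^{(j)}_{-r})^\top\bm{X}^{(j)}_{-r}\|_\infty$, and the lasso error bound from Lemma~\ref{lemma_1}) all hold simultaneously for $j=1,\dots,b$. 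This is precisely why the condition $n_j\ge c\log p$ is needed for $j\ge 2$: it allows a union bound over the $b\le N_b$ batches while keeping each failure event of order $p^{-c}$, so the total failure probability still tends to zero as $p,N_b\to\infty$.
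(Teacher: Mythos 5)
Your argument is correct and follows essentially the same route as the paper's proof: the same decomposition of $\bm{\widehat{z}}^{(j)}_r$ into $\bm{z}^{(j)}_r$ minus $\bm{X}^{(j)}_{-r}(\bm{\widehat{\gamma}}^{(j)}_r-\bm{\gamma}_r)$, a law-of-large-numbers/concentration lower bound of order $N_b$ for the population part, and the same $\ell_1$--$\ell_\infty$ bound combined with Lemma~\ref{lemma_1} and inequality \eqref{cumulative_1} of Lemma~\ref{lemma_1_3} for the error part. The only differences are cosmetic (your termwise inequality $(a-b)^2\ge a^2/2-2b^2$ in place of the paper's triangle inequality, and batchwise Bernstein bounds where a single concentration bound on the full sum $\lVert\bm{z}_r\rVert_2^2$ over all $N_b$ i.i.d.\ terms would suffice and avoid any dependence on the constant $c$ in $n_j\ge c\log p$).
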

\begin{proof}[Proof of Lemma \ref{lemma_2}]
	By the triangle inequality,
	\begin{eqnarray*}
		\lVert \bm{\widehat{z}}_{r} \lVert_2 &\leq & \lVert \bm{{z}}_{r} \lVert_2 + \lVert \bm{\widehat{z}}_{r} -  \bm{{z}}_{r} \lVert_2 = \lVert \bm{{z}}_{r} \lVert_2 + \sqrt{\sum_{j=1}^{b} \lVert \bm{\widehat{z}}^{(j)}_{r} -  \bm{{z}}^{(j)}_{r} \lVert_2^2}, \\
	\end{eqnarray*}
	where $ \bm{z}_r = ((\bm{z}_r^{(1)})^\top, \ldots, (\bm{z}_r^{(b)})^\top)^\top $.
	
	First, we intend to show that $ \lVert \bm{{z}}_{r} \lVert_2^2 = \Omega (N_b). $
	Recall that $ {z}_{r,1}^{(1)} $, $ {x}_{r,1}^{(1)} $ and $ \bm{X}_{-r,1}^{(1)} $ denote the first element of
	$ {\bz}_{r}^{(1)} $, $ {\bx}_{r}^{(1)} $ and the first row of $ \bm{X}_{-r}^{(1)} $ respectively.
	Consider  $ \zeta_r := \mathbb{E}\{({z}_{r,1}^{(1)})^2\} = \mathbb{E}\{({x}_{r,1}^{(1)}-\bm{X}_{-r,1}^{(1)}\bm{{\gamma}}_{r})^2\}. $
	Under Assumption \ref{assumption_2},  $\Lambda_{\min}^2 \leq \zeta_r \leq \Sigma_{j,j} =O(1). $
	It  then follows from the law of large numbers that
	\begin{equation*}
		\lVert \bm{{z}}_{r} \lVert_2^2 = \mathbb{E}(\lVert \bm{{z}}_{r} \lVert_2^2) + O_\mathbb{P}(\sqrt{N_b}) =   \zeta_rN_b + O_\mathbb{P}(\sqrt{N_b}).
	\end{equation*}
	Next,  we demonstrate that $\sum_{j=1}^{b} \lVert \bm{\widehat{z}}^{(j)}_{r} -  \bm{{z}}^{(j)}_{r} \lVert_2^2 = o_\mathbb{P}(N_b) $.  According to Lemma \ref{lemma_1},
	\begin{eqnarray*}
		\sum_{j=1}^{b} \lVert \bm{\widehat{z}}^{(j)}_{r} -  \bm{{z}}^{(j)}_{r} \lVert_2^2 &=&  \sum_{j=1}^{b}\lVert \bm{X}^{(j)}_{-r} (\bm{\widehat{\gamma}}^{(j)}_{r} - \bm{{\gamma}}_{r}) \lVert_2^2 \\
		&=& \sum_{j=1}^{b} n_j |(\bm{\widehat{\gamma}}^{(j)}_{r} - \bm{{\gamma}}_{r})^\top \widehat{\bm \Sigma}^{(j)}_{-r} (\bm{\widehat{\gamma}}^{(j)}_{r} - \bm{{\gamma}}_{r})|\\
		&\leq&  \sum_{j=1}^{b} n_j \| \widehat{\bm \Sigma}^{(j)}_{-r}\|_{\infty} \|\bm{\widehat{\gamma}}^{(j)}_{r} - \bm{{\gamma}}_{r}\|_{1}^2,
	\end{eqnarray*}
	where $ \widehat{\bm \Sigma}^{(j)}_{-r} = (\bm{X}^{(j)}_{-r})^\top \bm{X}^{(j)}_{-r}/n_j. $
	Recall that $ {\bm \Sigma}_{-r} $ is the principle submatrix of $ {\bm \Sigma} $ by removing  the $ r$-th row and the $ r$-th column.
	It can be shown along similar lines of  the proof of Lemma \ref{lemma_1} that,  with probability at least $ 1 - p^{-4} $,
	\begin{equation*}
		\| \widehat{\bm \Sigma}^{(j)}_{-r} - \bm \Sigma_{-r} \|_{\infty} \leq c_4\sqrt{\frac{\log p}{n_j}}, \ \ \text{for } j = 1, \ldots, b.
	\end{equation*}
	
	Since $ n_j \geq c \log p, j=1,\ldots, b $ and $ \| \bm \Sigma_{-r} \|_\infty  $ is bounded, it follows that for some constant $ c_5 $,
	\begin{equation*}
		\| \widehat{\bm \Sigma}^{(j)}_{-r}\|_\infty \leq  \|\bm \Sigma_{-r} \|_\infty+  \| \widehat{\bm \Sigma}^{(j)}_{-r} - \bm \Sigma_{-r} \|_{\infty} \leq c_5, \text{for } j = 1, \ldots, b.
	\end{equation*}
	Consequently,
	\begin{eqnarray*}
		\sum_{j=1}^{b} \| \bm{\widehat{z}}^{(j)}_{r} -  \bm{{z}}^{(j)}_{r} \|_2^2 &\leq&  c_5\sum_{j=1}^{b}  n_j \|\bm{\widehat{\gamma}}^{(j)}_{r} - \bm{{\gamma}}_{r}\|_{1}^2\\
		& \leq &   c_5s_r^2\log p\sum_{j=1}^{b}\frac{n_j}{N_j}\\
		&\leq& c_5{s_r^2\log p} \left(1+\log{\frac{N_b}{n_1}}\right) ,
	\end{eqnarray*}
	where the last inequality is from \eqref{cumulative_1} in Lemma \ref{lemma_1_3}.
	As
	$$ s_r^2\frac{\log p}{N_b}\log{\frac{N_b}{n_1}} \to 0 \text{ as }  N_b \to \infty, $$  then
	$$ \sum_{j=1}^{b} \lVert \bm{\widehat{z}}^{(j)}_{r} -  \bm{{z}}^{(j)}_{r} \lVert_2^2 = o_\mathbb{P}(N_b).$$
	As a result, we have shown that $ \lVert \bm{\widehat{z}}_{r} \lVert_2^2 \leq c_6N_b $ for some constant $ c_6 $ in probability.
	Similarly, in view of the fact that
	\begin{equation*}
		\lVert \bm{\widehat{z}}_{r} \lVert_2 \geq  \lVert \bm{{z}}_{r} \lVert_2 - \sqrt{\sum_{j=1}^{b} \lVert \bm{\widehat{z}}^{(j)}_{r} -  \bm{{z}}^{(j)}_{r} \lVert_2^2},
	\end{equation*}
	we can conclude that $ \lVert \bm{\widehat{z}}_{r} \lVert_2^2 \geq c_7N_b $  for some constant $ c_7. $ We complete the proof of Lemma \ref{lemma_2}.
\end{proof}

\begin{lemma}\label{lemma_3}
	Suppose that the conditions in Lemma \ref{lemma_1} hold and the subsequent batch size
	$ n_j \geq c\log p, j=2,\ldots, b $, for some constants $ c $. If
	$$s_0s_r \frac{\log (p)}{\sqrt{N_b}} = o(1),$$
	Then,
	$\Big\lvert \sum_{k \neq r}\bm{\widehat{z}}_{r}^\top \bm{x}_{k}({\widehat{\beta}}^{(b)}_k - {\beta}_{0,k})  \Big\lvert = O_\mathbb{P}\left(s_0s_r\log (p)\right) = o_\mathbb{P}\left(\sqrt{N_b}\right).$
\end{lemma}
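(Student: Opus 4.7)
\textbf{Proof proposal for Lemma \ref{lemma_3}.} My plan is to apply Hölder's inequality to split the cross term, use Lemma \ref{lemma_1_2} to control the $\ell_1$-error of the online lasso, and then carefully bound $\max_{k\neq r}|\bm{\widehat z}_r^\top \bm x_k|$ via a batch-by-batch decomposition that exploits the population first-order condition for $\bm\gamma_r$.

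First I would write
$$\Big\lvert \sum_{k\neq r}\bm{\widehat z}_r^\top \bm x_k(\widehat\beta_k^{(b)}-\beta_{0,k})\Big\rvert \;\le\; \|\widehat{\bbeta}^{(b)}-\bbeta_0\|_1\cdot \max_{k\neq r}\lvert \bm{\widehat z}_r^\top \bm x_k\rvert.$$
By Lemma \ref{lemma_1_2} (applied at stage $b$ with $\lambda_b=C\sqrt{\log p/N_b}$), the first factor is $O_{\mathbb{P}}(s_0\sqrt{\log p/N_b})$. The remaining task is thus to show $\max_{k\neq r}|\bm{\widehat z}_r^\top\bm x_k|=O_{\mathbb{P}}(s_r\sqrt{N_b\log p})$, from which $s_0s_r\log p=o(\sqrt{N_b})$ gives the stated conclusion.

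To bound $\max_{k\neq r}|\bm{\widehat z}_r^\top\bm x_k|$, I would use the decomposition
$$\bm{\widehat z}_r^\top \bm x_k \;=\; \sum_{j=1}^b (\bm z_r^{(j)})^\top \bm x_k^{(j)} \;+\; \sum_{j=1}^b (\bm\gamma_r-\bm{\widehat\gamma}_r^{(j)})^\top (\bm X_{-r}^{(j)})^\top \bm x_k^{(j)},$$
where $\bm z_r^{(j)}=\bm x_r^{(j)}-\bm X_{-r}^{(j)}\bm\gamma_r$. For the first sum, the population FOC for $\bm\gamma_r$ gives $\mathbb{E}[\bm z_r^\top \bm x_k]=0$ for $k\neq r$, and $(\bm z_r^{(j)})^\top\bm x_k^{(j)}$ is a sum of i.i.d.\ sub-exponential terms under Assumption \ref{assumption_1}. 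Applying Bernstein's inequality combined with a union bound over $k\neq r$ (as in the proof of Lemma \ref{lemma_1}) yields
$$\max_{k\neq r}\Big\lvert \sum_{j=1}^b (\bm z_r^{(j)})^\top \bm x_k^{(j)}\Big\rvert \;=\; O_{\mathbb{P}}\!\left(\sqrt{N_b\log p}\right).$$
For the second sum I would use Hölder's inequality term-by-term together with Lemma \ref{lemma_1}:
$$\Big\lvert (\bm\gamma_r-\bm{\widehat\gamma}_r^{(j)})^\top (\bm X_{-r}^{(j)})^\top \bm x_k^{(j)}\Big\rvert \;\le\; \|\bm\gamma_r-\bm{\widehat\gamma}_r^{(j)}\|_1\cdot \|(\bm X_{-r}^{(j)})^\top\bm x_k^{(j)}\|_\infty \;\le\; c_3 s_r\lambda_j\cdot O_{\mathbb{P}}(n_j),$$
where the $O_{\mathbb{P}}(n_j)$ bound on $\|(\bm X_{-r}^{(j)})^\top\bm x_k^{(j)}\|_\infty$ follows from $\max_{j,k}\Sigma_{k,k}=O(1)$ plus sub-Gaussian concentration $O_{\mathbb{P}}(\sqrt{n_j\log p})$, which is dominated by $n_j$ under $n_j\ge c\log p$. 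Summing in $j$ and invoking \eqref{cumulative_2} in Lemma \ref{lemma_1_3},
$$\sum_{j=1}^b s_r\sqrt{\tfrac{\log p}{N_j}}\cdot n_j \;=\; s_r\sqrt{\log p}\sum_{j=1}^b \tfrac{n_j}{\sqrt{N_j}} \;\le\; 2s_r\sqrt{N_b\log p}.$$

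Combining the two bounds gives $\max_{k\neq r}|\bm{\widehat z}_r^\top\bm x_k|=O_{\mathbb{P}}(s_r\sqrt{N_b\log p})$, so the full cross term is $O_{\mathbb{P}}(s_0s_r\log p)$, and the assumption $s_0s_r\log p/\sqrt{N_b}=o(1)$ upgrades this to $o_{\mathbb{P}}(\sqrt{N_b})$. The main obstacle, as anticipated in Remark \ref{remark 2}, is that we cannot invoke a single KKT condition for the online projection; I circumvent this by applying the batch-wise error bound from Lemma \ref{lemma_1} to each $\widehat{\bm\gamma}_r^{(j)}$ separately and controlling the resulting cumulative sum via the harmonic-type inequality \eqref{cumulative_2}.
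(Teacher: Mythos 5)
Your proposal is correct, and the overall architecture (H\"older to peel off $\lVert\widehat{\bbeta}^{(b)}-\bbeta_0\rVert_1$ via Lemma \ref{lemma_1_2}, a batch-wise bound on the projection error via Lemma \ref{lemma_1}, and the harmonic-type inequality \eqref{cumulative_2} to sum it) matches the paper's proof. The genuine difference is your choice of reference vector for the main term. The paper centers $\bm{\widehat z}_r$ at the \emph{offline empirical} projection $\bm{\widetilde z}_r$ built from the stage-$b$ nodewise lasso $\bm{\widehat\gamma}_r^{(b)}$, and bounds $\max_{k\neq r}|\bm{\widetilde z}_r^\top\bm x_k|\le N_b\lambda_b$ by the KKT condition of that lasso; the online error then involves $\lVert\bm{\widehat\gamma}_r^{(j)}-\bm{\widehat\gamma}_r^{(b)}\rVert_1$. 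You instead center at the \emph{population} projection $\bm z_r$, replace the KKT step by the population first-order condition $\mathbb{E}[\bm z_r^\top\bm x_k]=0$ plus Bernstein and a union bound (the same concentration already used in the proof of Lemma \ref{lemma_1}), and your error term involves $\lVert\bm{\widehat\gamma}_r^{(j)}-\bm\gamma_r\rVert_1$. Both error terms are $O(s_r\lambda_j)$ (the paper's via the triangle inequality through $\bm\gamma_r$), so the two routes deliver identical rates, $O_{\mathbb{P}}(s_0\log p)$ for the main term and $O_{\mathbb{P}}(s_0 s_r\log p)$ overall. What your route buys is that it dispenses with the KKT condition entirely and is more self-contained; what the paper's route buys is a clean separation into ``offline term plus online correction,'' which makes explicit (as advertised in Remark \ref{remark 2}) exactly where the online algorithm departs from the classical offline debiased-lasso analysis. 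The only minor caveats in your write-up are bookkeeping ones the paper also elides: the high-probability events from Lemma \ref{lemma_1} and the bound $\lVert\widehat{\bm\Sigma}^{(j)}_{-r}\rVert_\infty=O_{\mathbb{P}}(1)$ should be intersected over all $j=1,\dots,b$ and $k\neq r$, which the polynomially small failure probabilities accommodate.
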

\begin{proof}[Proof of Lemma \ref{lemma_3}] {As mentioned earlier in Remark \ref{remark 2}, due to $\bm{\widetilde{z}}^{(j)}_{r}=\bm{x}_{r}^{(j)}-\bm{X}_{-r}^{(j)}\bm{\widehat{\gamma}}^{(b)}_{r}$, we cannot directly apply KKT condition here. To see the difference with the proof in the offline debiased lasso, we first separate $\sum_{k \neq r}\bm{\widehat{z}}_{r}^\top \bm{x}_{k}({\widehat{\beta}}^{(b)}_k - {\beta}_{0,k})  $ into two parts: the offline term and one additional term from the online algorithm. The upper bound of the former is derived from KKT condition while the latter is tackled differently.} Consider  $ \bm{\widetilde{z}}_{r} = ( (\bm{\widetilde{z}}^{(1)}_{r})^\top, \ldots, (\bm{\widetilde{z}}^{(b)}_{r})^\top)^\top \in \mathbb{R}^{N_b}$ where
	$ \bm{\widetilde z}^{(j)}_{r} = \bm{x}^{(j)}_{r}-\bm{X}^{(j)}_{-r}\bm{\widehat{\gamma}}^{(b)}_{r}.$
	Write
	\begin{eqnarray*}
		\sum_{k \neq r}\bm{\widehat{z}}_{r}^\top \bm{x}_{k}({\widehat{\beta}_k}^{(b)} - {\beta}_{0,k}) &=& \sum_{k \neq r} \bm{\widetilde{z}}_{r}^\top \bm{x}_{k}({\widehat{\beta}}^{(b)}_k - {\beta}_{0,k}) + \sum_{k \neq r}(\bm{\widehat{z}}_{r}- \bm{\widetilde{z}}_{r})^\top \bm{x}_{k}({\widehat{\beta}}^{(b)}_k - {\beta}_{0,k})\\
		&:{=}&  \Pi_\text{off} + \Pi_\text{on},
	\end{eqnarray*}
	where $\Pi_\text{off}$ pertains to an offline term and $\Pi_\text{on}$ pertains to an online error.
	First,
	\begin{equation*}
		\Pi_\text{off}  \leq 	\|\bm{\widehat{\beta}}^{(b)} - \bm{\beta}_0\|_1 \max_{k \neq r}  |\bm{\widetilde{z}}_{r}^\top \bm{x}_{k}|.
	\end{equation*}
	By the Karush-Kuhn-Tucker (KKT) condition and Lemma \ref{lemma_1_2},
	\begin{equation*}
		\max_{k \neq r} |\bm{\widetilde{z}}_{r}^\top \bm{x}_{k}| \leq N_b \lambda_b = c_2 \sqrt{N_b{\log(p)}},\ \|\bm{\widehat{\beta}}^{(b)} - \bm{\beta}_0\|_1 \leq c_3s_0\sqrt{\frac{\log (p)}{N_b}}.
	\end{equation*}
	Then,
	\begin{equation*}
		\Pi_\text{off} = O_\mathbb{P}\left(  s_0\sqrt{\frac{\log (p)}{N_b}}\left\{\sqrt{N_b{\log(p)}}\right\}\right) = O_\mathbb{P}(s_0{\log(p)}).
	\end{equation*}
	Second, for the online error,
	\begin{eqnarray*}
		\Pi_\text{on} &\leq&
		\|\bm{\widehat{\beta}}^{(b)} - \bm{\beta}_0\|_1 \max_{k \neq r} |(\bm{\widehat{z}}_{r}- \bm{\widetilde{z}}_{r})^\top \bm{x}_{k}|\\
		&=& \|\bm{\widehat{\beta}}^{(b)} - \bm{\beta}_0\|_1 \max_{k \neq r} \Big \lvert \sum_{j=1}^{b} (\bm{\widehat{z}}^{(j)}_{r} -  \bm{\widetilde{z}}^{(j)}_{r})^\top \bm{x}^{(j)}_{k} \Big\lvert.
	\end{eqnarray*}
	By Lemma \ref{lemma_1}, we obtain
	\begin{eqnarray*}
		|(\bm{\widehat{z}}^{(j)}_{r} -  \bm{\widetilde{z}}^{(j)}_{r})^\top \bm{x}^{(j)}_{k}| &\leq& \lVert \bm{\widehat{\gamma}}^{(j)}_{r} - \bm{\widehat{\gamma}}^{(b)}_{r} \lVert_1 \max_{m \neq r} |[\bm{x}^{(j)}_{m}]^\top \bm{x}^{(j)}_{k}| \\
		&=& O_\mathbb{P}\left(s_r \sqrt{\frac{\log (p)}{N_j}} \times n_j \| \widehat{\bm \Sigma}^{(j)}_{-r} \|_\infty
		\right).
	\end{eqnarray*}
	In the proof of Lemma \ref{lemma_2}, we have shown that $\| \widehat{\bm \Sigma}^{(j)}_{-r} \|_\infty$ is bounded with probability tending to 1. 
	As a result,
	\begin{eqnarray*}
		\Big \lvert\sum_{j=1}^{b} (\bm{\widehat{z}}^{(j)}_{r} -  \bm{\widetilde{z}}^{(j)}_{r})^\top \bm{x}^{(j)}_{k} \Big\lvert &=& O_\mathbb{P} \left(\sum_{j=1}^{b} s_r\sqrt{\frac{n_j^2\log p}{N_j}} \right) \\
		&=&O_\mathbb{P}\left(s_r\sqrt{N_b\log (p)} \right),
	\end{eqnarray*}
	where the last equation is from \eqref{cumulative_2} in Lemma \ref{lemma_1_3}.
	
	Then,
	\begin{equation*}
		\Pi_\text{on} = O_\mathbb{P}\left(s_0\sqrt{\frac{\log (p)}{N_b}}\left\{s_r\sqrt{N_b\log (p)}\right\}\right) = O_\mathbb{P}\left(s_0s_r\log (p)\right).
	\end{equation*}
	Since $s_0s_r {\log (p)}/{\sqrt{N_b}} = o(1)$, the statement of the lemma follows.
	
\end{proof}

\begin{proof}[Proof of Theorem \ref{thm_online}]
	Recall that $ \bm{X} = ((\bm{X}^{(1)})^\top, \ldots,(\bm{X}^{(b)})^\top)^\top, \bm{y} = ((\bm{y}^{(1)})^\top, \ldots, (\bm{y}^{(b)})^\top)^\top $, $ \bm{ x}_{r} = ((\bm{ x}^{(1)}_r)^\top, \ldots, (\bm{ x}_r^{(b)})^\top)^\top $ and $ \bm{\widehat z}_{r} = ((\bm{\widehat z}^{(1)}_r)^\top, \ldots, (\bm{\widehat z}_r^{(b)})^\top)^\top $.
	Then, we write the online debiased estimator in \eqref{online_debiased_algorithm} into the following vector form:
	\begin{equation*}
		{\widehat{\beta}}^{(b)}_{\text{on}, r} = {\widehat{\beta}}^{(b)}_{r} -  \frac{\bm{\widehat{z}}_{r}^\top (\bm{y} - \bm{X}\bm{\widehat{\beta}}^{(b)})}{\bm{\widehat z}_{r}^\top \bm{x}_{r}}.
	\end{equation*}
	Subtract the  true parameter $ {\beta}_{0,r} $ and obtain
	\begin{equation*}
		{\widehat{\beta}}^{(b)}_{\text{on},r} - {\beta}_{0,r} = \frac{\lVert\bm{\widehat z}_{r}\lVert_2}{\bm{\widehat z}_{r}^\top \bm{x}_{r}}\left\{\frac{\bm{\widehat{z}}_{r}^\top \bm \epsilon}{\lVert\bm{\widehat z}_{r}\lVert_2} - \frac{\sum_{k \neq r}\bm{\widehat{z}}_{r}^\top \bm{x}_{k}({\widehat{\beta}}^{(b)}_k - {\beta}_{0,k})}{\lVert\bm{\widehat z}_{r}\lVert_2} \right\}.
	\end{equation*}
	What remains to be shown is that,
	\begin{eqnarray*}
		&&\lVert\bm{\widehat z}_{r}\lVert_2 = {\Omega(\sqrt{N_b})},\\
		&&\sum_{k \neq r}\bm{\widehat{z}}_{r}^\top \bm{x}_{k}({\widehat{\beta}}^{(b)}_k - {\beta}_{0,k}) = o_\mathbb{P}(\sqrt{N_b}).
	\end{eqnarray*}
	as detailed by Lemma \ref{lemma_2} and Lemma \ref{lemma_3} respectively.
\end{proof}

\begin{proof}[Proof of Theorem \ref{thm_online_2} and Theorem \ref{thm_online_3}] Theorem \ref{thm_online_2} and Theorem \ref{thm_online_3} can be proved in the same fashion as Theorem \ref{thm_online}. Due to  limited space, we only point out the main difference. In the proof of Theorem \ref{thm_online_2}, we will show the upper bound of $ \| \widehat{\bm \Sigma}^{(j)}_{-r} \|_\infty $ is $ O_\mathbb{P}(\log p) $, by replacing $ n_j, j = 2,\ldots, b $ with $ 1 $ in Lemma \ref{lemma_2} and Lemma \ref{lemma_3}. For Theorem \ref{thm_online_3}, the major difference is to establish a similar lemma to Lemma \ref{lemma_1} with conclusion
	$ 	\lVert \bm{\widehat{\gamma}}^{(j)}_{r} - \bm{{\gamma}}_{r} \lVert_1 \leq c s_r^2 \lambda_j $ under Assumption \ref{assumption_2}.

\end{proof}

\begin{figure}
	\includegraphics[width=\linewidth]{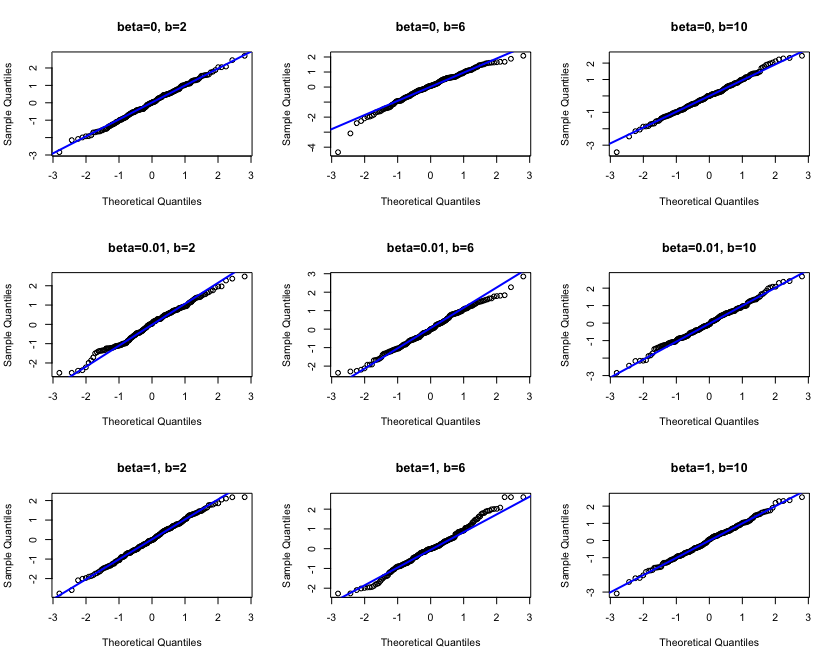}
	\caption{\label{fig:QQ_1000}QQ plots of standardized $\widehat{\beta}_{\text{on},r}^{(b)}$ with total sample size $N_b=1200$, $p=1000$ and $\bSigma = \{0.5^{|i-j|} \}_{i,j=1,\dots,p}$. Each column represents the estimated parameter $\widehat{\beta}_{\text{on},r}^{(b)}$ at data batches $b=2, 6, 10$. Each row corresponds to a true value of parameter $\bbeta_0$, i.e. $\beta_{0,r}=0, 0.01, 1$. }
\end{figure}

\end{document}